\documentclass[12pt,reqno]{amsart}
\topmargin -0.1cm
\advance \topmargin by -\headheight
\advance \topmargin by -\headsep
\setlength{\paperheight}{270mm}%
\textheight 22.5cm
\oddsidemargin 0.2cm
\evensidemargin \oddsidemargin
\marginparwidth 2cm
\textwidth 16.4cm

\usepackage{amsmath}
\usepackage{amsfonts}
\usepackage{stmaryrd}
\usepackage{amssymb}
\usepackage{amsthm}
\usepackage{csquotes}
\usepackage[toc,page]{appendix} 
\usepackage{mathrsfs}
\usepackage{dsfont}
\usepackage{bm}
\usepackage{cite}
\usepackage{soul}
\usepackage{setspace}
\usepackage{breqn}
\usepackage{tikz}
\usetikzlibrary{calc}
\usepackage{subfig}
\usepackage{subcaption}  
\numberwithin{equation}{section}

\newcommand{\n}[1]{\|{#1}\|}

\newcommand{\R}{\mathbb{R}}
\newcommand{\diam}{\text{diam}}
\newcommand{\diag}{\text{diag}}
\newcommand{\rank}{\text{rank}}
\newtheorem{theorem}{Theorem}[section]
\newtheorem{corollary}{Corollary}[theorem]
\newtheorem{lemma}[theorem]{Lemma}

\newtheorem{definition}[theorem]{Definition}
\newtheorem{example}[theorem]{Example}

\usepackage{mathtools}
\usepackage{hyperref}

\title[Additive growth amongst images of linearly independent analytic functions]{Additive growth amongst images of linearly independent analytic functions}

\author{Samuel Mansfield}
\address{Department of Mathematics, University of Bristol, Bristol, BS8 1UG, UK}
\email{sam.mansfield@bristol.ac.uk}

\date{}

\begin{document}

\begin{abstract}
    Let $\mathcal{F}$ be a set of $n$ real analytic functions with linearly independent derivatives restricted to a compact interval $I$. We show that for any finite set $A \subset I$, there is a function $f \in \mathcal
    F$ that satisfies
    $$|2^{n-1}f(A)-(2^{n-1}-1)f(A)|\gg_{\mathcal{F},I} |A|^{\phi(n)},$$
    where $\phi:\mathbb{N} \to \mathbb{R}$ satisfies the recursive formula
    $$\phi(1)=1, \quad \phi(n)=1+\frac{1}{1+\frac{1}{\phi(n-1)}} \quad \text{for } n\geq 2.$$
    The above result allows us to prove the bound
    $$|2^nf(A-A)-(2^n-1)f(A-A)| \gg_{f,n,I} |A|^{1+\phi(n)}$$
    where $f$ is an analytic function for which any $n$ distinct non-trivial discrete derivatives of $f'$ are linearly independent. This condition is satisfied, for instance, by any polynomial function of degree $m \geq n+1$. We also check this condition for the function $\arctan(e^x)$ with $n=3$, allowing us to improve upon a recent bound on the additive growth of the set of angles in a Cartesian product due to Roche-Newton.
    
\end{abstract}

\maketitle





\section{Introduction}
We say that a subset $A=\{a_1<\dots<a_{|A|}\} \subset \R$ is convex if the sequence of consecutive differences 
$$d_i\coloneqq a_{i+1}-a_i \quad \text{for } 1 \leq i \leq |A|-1$$ is strictly increasing. Initiated by Erd\H{o}s, the investigation into the additive growth of convex sets of real numbers has accumulated significant interest over the past 40 years. This `additive growth' is typically quantified by the growth of the set $A$ under (repeated) applications of the \textit{sum set} and \textit{difference set} operations 
\begin{align*}
    A+A&\coloneqq\{a_1+a_2:a_1,a_2 \in A\},\\   A-A &\coloneqq \{a_1-a_2: a_1, a_2 \in A\},\\
     sA&:=\{a_1+\dots+a_s:a_1,\dots,a_s \in A\}.
\end{align*}
It is conjectured that for any finite convex set $A \subset \R$ the following bound ought to hold for all $\epsilon>0$\footnote{We write $Y \gg_{z} X$ to mean $X \geq C({z}) Y$ where $C(z)$ is some positive constant depending only on the parameter $z$}.
$$|A+A|\gg_\epsilon|A|^{2-\epsilon}.$$
The first non-trivial lower bound for $|A+A|$ is due Hegyv\'{a}ri \cite{hegyvari1986consecutive}, with the current records, held by Rudnev-Stevens \cite{rudnev2022update} for sum sets and Bloom \cite{bloom} for difference sets. These records are, for any $\epsilon>0$, 
\begin{equation*}\label{records}
    |A+A|\gg_\epsilon |A|^{30/19-\epsilon} \qquad \text{and} \qquad |A-A| \gg_\epsilon|A|^{6681/4175-\epsilon}.
\end{equation*}

For the longer sum $A+A-A$, a simple but clever \textit{squeezing} argument due to Ruzsa-Shakan-Solymosi-Szemer\'{e}di in \cite{shakanetc}, building upon ideas of Garaev \cite{garaev2000lower}, proves the following bound.
\begin{theorem}[Ruzsa-Shakan-Solymosi-Szemer\'edi \cite{shakanetc}]\label{basicsqueeze}
    Let $A \subset \R$ be a finite convex set, then 
    $$|A+A-A| \gg |A|^2.$$
\end{theorem}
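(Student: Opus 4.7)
The plan is to exhibit an explicit injection from a set of size $\binom{|A|}{2}$ into $A+A-A$; this immediately gives $|A+A-A|\ge \binom{|A|}{2}\gg |A|^2$, which is what is wanted. Writing $A=\{a_1<\dots<a_n\}$ and $d_i:=a_{i+1}-a_i$, convexity is exactly the statement that $d_1<d_2<\dots<d_{n-1}$. I would consider the map
\[
\phi\colon\{(i,j):1\le i<j\le n\}\longrightarrow A+A-A,\qquad \phi(i,j):=a_j+a_{i+1}-a_i=a_j+d_i.
\]
The image clearly lies in $A+A-A$, the domain has size $\binom{n}{2}$, and the entire argument reduces to the injectivity of $\phi$.

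For injectivity, suppose $\phi(i,j)=\phi(i',j')$ with $(i,j)\ne(i',j')$. The case $i=i'$ forces $a_j=a_{j'}$, hence $j=j'$; so WLOG $i<i'$. Strict monotonicity of the $d_l$ gives $d_i<d_{i'}$, and rearranging the equation $a_j+d_i=a_{j'}+d_{i'}$ forces $a_j>a_{j'}$, so $j>j'$, i.e.\ $j\ge j'+1$. The idea is now to derive a contradiction by comparing $a_j-a_{j'}$ with $d_{i'}-d_i$: using convexity on one side gives $a_j-a_{j'}=\sum_{l=j'}^{j-1}d_l\ge d_{j'}$, while on the other side, because $i'<j'$ forces $i'\le j'-1$, convexity yields $d_{i'}-d_i<d_{i'}\le d_{j'-1}<d_{j'}$. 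These two estimates are incompatible with $a_j-a_{j'}=d_{i'}-d_i$, so no collision is possible.

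I expect the main obstacle to be conceptual rather than computational: namely, spotting the correct parameterisation and, crucially, the correct restricted domain $\{i<j\}$. A naive attempt taking all pairs $(i,j)\in\{1,\dots,n-1\}\times\{1,\dots,n\}$ enlarges the domain to $n(n-1)$ but introduces genuine, hard-to-control fibres of $\phi$, and I do not see a clean Cauchy--Schwarz argument that recovers the full quadratic bound from that larger domain. The restriction $i<j$ is precisely what unlocks the chain $d_{i'}\le d_{j'-1}<d_{j'}\le a_j-a_{j'}$, after which convexity kills every would-be collision in one stroke.
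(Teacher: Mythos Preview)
Your proof is correct and is essentially the paper's squeezing argument: both construct the elements $a_j+d_i$ for $i<j$ (the paper in fact allows $i\le j$) and show they are pairwise distinct. The paper's presentation is a touch more direct---it observes that convexity forces $a_j+d_i\in(a_j,a_{j+1}]$, so disjointness of these half-open intervals together with distinctness of the $d_i$ gives injectivity immediately---whereas your algebraic collision argument (bounding $a_j-a_{j'}\ge d_{j'}$ against $d_{i'}-d_i<d_{j'}$) unwinds to exactly this interval picture.
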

This intuition can be extended---sets that are somehow `more convex' should grow further under repeated addition. To quantify this idea, Hanson--Roche-Newton--Rudnev \cite{MishaBrandonOlly} introduce the notion of \textit{higher convexity}.
\begin{definition}\label{convexset} Let $A=\{a_1<\dots<a_{|A|}\}$ of real numbers and consider the set of consecutive differences $D\coloneqq \{a_{i+1}-a_i: 1 \leq i \leq |A|\}.$ We say, inductively, that any set written in increasing order is $0$-convex and that $A$ is $k$-convex for $k \geq 1$ if $D$ is a $(k-1)$-convex set. Note that a set is convex if and only if it is $1$-convex.
\end{definition} 
\textit{Remark}: One may replace the word `increasing' with `decreasing' in the above to get the definition of a \textit{concave} set. Note that since additive growth is invariant under reflections, all stated results hold with `convex set' replaced by `concave set'.  
    
With the concept of higher convexity in hand, the aforementioned squeezing techniques are pushed further to elicit larger growth for sets with higher-order convexity. In particular, Hanson---Roche-Newton---Rudnev \cite{MishaBrandonOlly} prove the following estimate.
\begin{theorem}[Hanson---Roche-Newton---Rudnev \cite{MishaBrandonOlly}]\label{longsumconvexset}
    Let $k \in \mathbb{N}$ and $A \subset \R$ be a finite $k$-convex set. Then 
    \begin{equation*}
          |2^k A - (2^k-1)A| \gg_k |A|^{k+1}. 
    \end{equation*}
\end{theorem}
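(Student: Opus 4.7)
The plan is to prove this by induction on $k$. The base case $k=1$ is exactly Theorem \ref{basicsqueeze}. For the inductive step, I would let $A = \{a_1 < \cdots < a_N\}$ be $k$-convex and peel off one layer of the higher-convexity hierarchy by working with $D := \{a_{i+1}-a_i : 1 \leq i \leq N-1\}$. By Definition \ref{convexset}, $D$ is $(k-1)$-convex with $|D| = N-1$, so the inductive hypothesis applied to $D$ yields
$$|2^{k-1}D - (2^{k-1}-1)D| \gg_k |D|^k \gg_k N^k.$$

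Write $B := 2^{k-1}D - (2^{k-1}-1)D$. Since each $d \in D$ lies in $A - A$, every $b \in B$ can be expanded as a signed sum of $2(2^{k-1}) - 1 = 2^k - 1$ positive and $2^k - 1$ negative elements of $A$, i.e.\ $B \subseteq (2^k-1)A - (2^k-1)A$. Consequently $A + B \subseteq 2^k A - (2^k-1)A$, and the theorem reduces to showing the sumset lower bound $|A + B| \gg_k N^{k+1}$. This is where the previous step gave us $|B| \gg_k N^k$, and we need one additional factor of $N$ from the $A$-variable.

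For the sumset step I would run a squeezing argument in the spirit of Theorem \ref{basicsqueeze}. The idea is to parametrise $b \in B$ by tuples of indices in $D$ and count collisions $a + b = a' + b'$, equivalently $a' - a = b - b'$. Here $a' - a$ is a strictly positive element of $D + D + \cdots + D$ (a consecutive block of $d_i$), while $b - b'$ is a signed combination of $d_i$'s tied to the parametrisation of $B$. The strict monotonicity of the $d_i$, together with iterated monotonicity of the higher consecutive-difference sets $D^{(j)}$ (which are $(k-j)$-convex, hence increasing), should bound the number of solutions to each such equation and thereby control the representation function $r_{A+B}$.

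The main obstacle is exactly this sumset estimate: the trivial bound $|A+B| \geq |B| \gg N^k$ falls short by the critical factor of $N$, so one must genuinely exploit the hierarchical convex structure rather than treat $A$ and $B$ as generic sets. A clean way to organise this may be to bypass the intermediate $B$ entirely and instead show directly that the product map
$$D^{(0)} \times D^{(1)} \times \cdots \times D^{(k)} \longrightarrow 2^k A - (2^k-1)A, \qquad (x_0,\dots,x_k) \mapsto x_0 + x_1 + \cdots + x_k,$$
is almost-injective, using that each $D^{(j)}$ is strictly increasing (and positive for $j \geq 1$) and that a collision forces a system of equations among the layered differences that can be resolved recursively. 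Establishing this injectivity in a quantitative form with the right dependence on $k$ is the essential technical heart of the argument.
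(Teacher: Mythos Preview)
Your induction setup is fine, but the reduction to bounding $|A+B|$ with $B=2^{k-1}D-(2^{k-1}-1)D$ discards exactly the information you need. Applying the inductive hypothesis once to the full set $D$ only tells you $|B|\gg_k N^k$; it says nothing about \emph{where} the elements of $B$ sit relative to the gaps $(a_i,a_{i+1}]$, and the collision-counting sketch gives no mechanism to recover that positional information. Without it there is no reason $|A+B|$ should exceed $|B|$ by a factor of $N$.

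The missing idea---and the paper's approach---is twofold. First, strengthen the inductive statement to a localized count: for $k$-convex $A$ there are $\gg_k |A|^{k+1}$ elements of $2^kA-(2^k-1)A$ lying inside $(\min A,\max A)$. Second, in the inductive step apply this not to $D$ but to each truncation $D_i=\{d_1<\cdots<d_i\}$, which is still $(k-1)$-convex, producing $\gg_k i^{k}$ elements of $2^{k-1}D_i-(2^{k-1}-1)D_i$ inside $(d_1,d_i)\subset(0,a_{i+1}-a_i]$; translating by $a_i$ lands them in the pairwise disjoint intervals $(a_i,a_{i+1}]$, and $\sum_i i^k\gg_k N^{k+1}$ closes the induction. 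Your product-map formulation is in fact equivalent to this and needs no separate injectivity argument: restricted to the simplex $\{i_k\le i_{k-1}\le\cdots\le i_0\}$ the map is genuinely injective, because for each $j$ the partial sum $a_{i_0}+d^{(1)}_{i_1}+\cdots+d^{(j)}_{i_j}$ lies in the half-open interval $\bigl(a_{i_0}+\cdots+d^{(j-1)}_{i_{j-1}},\,a_{i_0}+\cdots+d^{(j-1)}_{i_{j-1}+1}\bigr]$, and these intervals nest disjointly---that \emph{is} the squeezing. So the ``essential technical heart'' you flag is not a separate obstacle; it drops out immediately once you carry the containing interval through the induction rather than forgetting it.
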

\subsection{Growth for sum sets of convex functions}
 A finite subset $A \subset \R$ is convex (or concave) if and only if it is the image of the arithmetic progression $[N]=\{1,\dots,N\}$ under some strictly monotone and strictly convex (or concave) function $f: [1,N] \to \R$. Since arithmetic progressions are the archetype of sets that do not exhibit additive growth, one expects, for strictly convex or concave $f$, the sets $f(A)$ and $A$ to have inversely related additive growth. That is, $f(A)+f(A)$ should be large when $A+A$ is small and vice versa. The first result of this form is due to Elekes-Nathanson-Ruzsa \cite{ENR2000}, with the current records held by Stevens-Warren \cite{stevens2022sum} and Bloom \cite{bloom} for sums and Bloom \cite{bloom} for differences. These bounds are, for all $\epsilon>0$,
    \begin{equation*}
    |A+A||f(A)+f(A)| \gg_\epsilon |A|^{49/19-\epsilon} \qquad \text{and} \qquad |A-A||f(A)-f(A)| \gg_\epsilon |A|^{5428/4175-\epsilon}.
    \end{equation*}  


Similar to the situation with convex sets, `higher convexity' affords us more growth with longer sums. For functions, higher convexity is described by the following definition.
\begin{definition}
 Let $I \subseteq \R$ be an interval. We say a function $f:I \to \R$ is $0$-convex if it is strictly increasing on $I$. Further, for $k \geq 1$, we say that a function $f \in C^k(I)$ is $k$-convex if $f$ and each of its first $k$ derivatives are strictly increasing on $I$. 
\end{definition}
\textit{Remark:} Note that, by the above definition, \textit{convexity} and \textit{1-convexity} are not the same. A convex function may be not be strictly convex and it could be decreasing on some subinterval, but a 1-convex function is necessarily strictly increasing and strictly convex. This terminological distinction will be helpful later. We also remark that all results that hold for $k$-convex functions hold in the more general case that each of the derivatives $f^{(0)},f^{(1)},\dots,f^{(k)}$ are strictly monotone.    


Work by Hanson--Roche-Newton--Rudnev \cite{MishaBrandonOlly} and Bradshaw \cite{peter} in this setting culminates in the following bound.
\begin{theorem}[Bradshaw \cite{peter}]\label{petersqueeze}
    Let $I$ be an interval and $A \subset I$ be a finite set with $|A+A-A|=K|A|$. Let $k \in \mathbb{N}$ and $f:I \to \R$ be a $k$-convex function. Then we have
    \begin{equation}\label{petersqueezed}
    |2^kf(A)-(2^k-1)f(A)| \gg_k \frac{|A|^{k+1}}{K^{2^{k+1}-k-2}}.
     \end{equation}
\end{theorem}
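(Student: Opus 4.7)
I would proceed by induction on $k$, adapting the squeezing techniques developed in \cite{shakanetc} and extended to higher convexity in \cite{MishaBrandonOlly}, but tracking the explicit dependence on $K = |A+A-A|/|A|$ throughout.

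\textit{Base case $k=1$.} To show $|2f(A)-f(A)| \gg |A|^2/K$, I would list $A = \{a_1 < \cdots < a_n\}$ and consider the candidate sums $f(a_j) + (f(a_{i+1}) - f(a_i)) \in 2f(A) - f(A)$ for $1 \leq i \leq n-1$ and $1 \leq j \leq n$. Strict convexity of $f$ forces the consecutive differences $T_i := f(a_{i+1}) - f(a_i)$ to be strictly increasing, so a coincidence $f(a_j) + T_i = f(a_{j'}) + T_{i'}$ translates into the rigid relation $f(a_j) - f(a_{j'}) = T_{i'} - T_i$. The number of such coincidences is bounded by a Cauchy--Schwarz argument on the representation function, and then converted via a Plünnecke--Ruzsa-type inequality into a bound featuring $|A+A-A| = K|A|$, giving the target $K$-dependence.

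\textit{Inductive step.} Assuming the bound for $(k-1)$-convex functions, the key observation is that for every $h > 0$ the discrete derivative $g_h(x) := f(x+h) - f(x)$ is $(k-1)$-convex on $I \cap (I-h)$, since $g_h^{(j)}(x) = f^{(j)}(x+h) - f^{(j)}(x)$ is strictly increasing for each $0 \leq j \leq k-1$ by the strict monotonicity of $f^{(j+1)}$. Choosing $h$ to be a consecutive difference $a_{\ell+1} - a_\ell$ of $A$ and fixing a large subset $B \subset A$ with $B + h \subset I$, the inductive hypothesis applied to $g_h$ on $B$ yields
\[
\bigl| 2^{k-1} g_h(B) - (2^{k-1}-1) g_h(B) \bigr| \gg_k \frac{|B|^{k}}{K^{2^{k}-k-1}}.
\]
Every element of the left-hand set is a linear combination of elements of $f(A+h) \cup f(A)$, so applying Plünnecke--Ruzsa to pull the bound back to a sumset of $f(A)$ costs an additional $K^{2^{k}-1}$ factor; pigeonholing over the $\sim |A|$ admissible shifts $h$ then contributes an extra factor of $|A|$ and produces the desired lower bound $|2^k f(A) - (2^k-1) f(A)| \gg |A|^{k+1} / K^{2^{k+1}-k-2}$.

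\textit{Main obstacle.} The principal difficulty is the precise bookkeeping of $K$-factors through the induction, since each application of Plünnecke--Ruzsa to transfer between different sumsets of $f(A)$ multiplies the $K$-exponent, and only by carefully exploiting the structure of the sumsets arising from the discrete derivative $g_h$ does one land exactly on $2^{k+1}-k-2$ rather than a looser exponent. A secondary technical concern is maintaining $|B+B-B| \lesssim K|B|$ after each pigeonhole restriction and ensuring enough shifts $h$ remain available at the bottom of the induction to produce the final $|A|$-factor without polynomial loss.
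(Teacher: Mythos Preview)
Your plan has genuine gaps in both the base case and the inductive step, and the overall strategy diverges from the one the paper (following Bradshaw) actually uses.

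\textbf{Base case.} The claim that the consecutive differences $T_i = f(a_{i+1}) - f(a_i)$ are strictly increasing is false for general $A$: convexity of $f$ makes $x \mapsto f(x+d)-f(x)$ increasing for \emph{fixed} $d$, but here the gaps $a_{i+1}-a_i$ vary. (Take $f(x)=x^2$, $A=\{0,2,2.5\}$: then $T_1=4>T_2=2.25$.) So the ``coincidences are rigid'' step does not go through, and the Cauchy--Schwarz/Pl\"unnecke--Ruzsa sketch has no traction. The paper's base case is instead the equidistribution argument (Lemma~\ref{basecase} with $g=\mathrm{id}$): the map $a_i \mapsto (a_{i+1}-a_i,\,f(a_{i+1})-f(a_i))$ is injective by the mean value theorem, and for ``good'' $a_i$ both coordinates lie among the smallest $O(|2A-A|/|A|)$, resp.\ $O(|2f(A)-f(A)|/|A|)$, positive differences, giving $|2A-A|\cdot|2f(A)-f(A)| \gg |A|^3$ directly.

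\textbf{Inductive step.} Three problems. First, with only $B+h \subset I$ you get $g_h(B) \subset f(B+h)-f(B)$, and $f(B+h)$ has nothing to do with $f(A)$; you need $B+h \subset A$, which forces you to restrict to the set $B_d=\{a_i: a_{i+1}-a_i=d\}$ for each consecutive difference $d$. Second, Pl\"unnecke--Ruzsa on sumsets of $f(A)$ produces powers of the tripling constant of $f(A)$, not of $K=|A+A-A|/|A|$; there is no mechanism to ``pull back'' at cost $K^{2^k-1}$. Third, ``pigeonholing over the $\sim |A|$ shifts'' selects one shift and cannot contribute an extra factor of $|A|$. The paper's argument avoids all of this: by Lemma~\ref{B_d} (the squeezing lemma), the translates $f(b_{e_i})+\Delta_d f(B_d(i))$ land in pairwise disjoint intervals $(f(b_{e_i}),f(b_{e_{i+1}}))$, so one simply sums the inductive lower bounds over $d\in D$ and $1\le i\le |B_d|$, then uses H\"older together with $|D|\ll K$ (from Lemma~\ref{equidistribution}) to close the induction. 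No Pl\"unnecke--Ruzsa enters anywhere.
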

The sharpness of this inequality can be verified by considering the $k$-convex function $f(x)=x^{k+1}$ and setting $A=[N]$. This also demonstrates the sharpness of Theorems \ref{basicsqueeze} and  \ref{longsumconvexset}.
One should note that $k$-convexity of $f$ is necessary in these inequalities, for if one takes $A=[N]$ and $f(x)=x^k$--an example of a $(k-1)$-convex, but not $k$-convex function--then
$$2^kf([N])-(2^k-1)f([N]) \subset \{-(2^k-1)N^k,\dots,2^kN^k\}$$
and hence
$$|2^kf([N])-(2^k-1)f([N])| \ll_k N^k.$$
So Theorem \ref{petersqueeze} can not hold in this case. We note that a version of this Theorem \ref{petersqueeze} was proved originally by Hanson--Roche-Newton--Rudnev \cite{MishaBrandonOlly} with additional log factors, the removal of which comes from an improvement to the base case
\begin{equation}\label{basicinequality}
    |A+A-A||f(A)+f(A)-f(A)| \gg |A|^3,
\end{equation}
proved via an `equidistribution' argument, see \S\ref{equidistsec}. 
\subsection{Linear independence and analyticity}
Note that, by setting $A=g(B)$, one may modify the inequality (\ref{basicinequality}) to prove that
\begin{equation}\label{basicinequality2}
    |f(B)+f(B)-f(B)||g(B)+g(B)-g(B)|\gg |B|^3
\end{equation}
for any two strictly monotone continuously differentiable functions $f,g$ on the same interval $I$ containing $g^{-1}(B)$, such that $f\circ g^{-1}:g(I)\to \R$ is strictly convex or strictly concave. We will use the idea of replacing $A$ with $g(B)$ repeatedly to introduce more functions into our inequalities. Applying the chain rule, one may see that convexity of $f\circ g^{-1}$ is equivalent to the condition that the ratio of derivatives $f'/g'$ is strictly monotone, as long as $g'$ never vanishes. 

It is at this point that we restrict our attention to analytic functions restricted to compact intervals. Analyticity guarantees that for linearly independent and non-vanishing $f'$ and $g'$, $(f'/g')'$ can only vanish at $O_{f,g,I}(1)$ points on $I$. Hence, upon removing these isolated points, $I$ is partitioned into $O_{f,g,I}(1)$ subintervals upon which $f'/g'$ is strictly monotone. It then follows from the pigeonhole principle that one such interval must contain $\Omega_{f,g,I}(|A|)$ points of any finite $A \subset I$. Note that while our implicit constants depend on the functions $f$ and $g$ and the interval $I$, for our applications, we will consider fixed predefined functions over a fixed predefined interval, whence our constants will be genuine. 

In summary, for finite subsets of compact intervals, restriction to analytic functions allows us to use linear independence to detect growth (at the cost of implicit constants, depending on the functions). Therefore, we make the following definition.
\begin{definition}\label{mutcon}
    We say that any set $\mathcal{F}=\{f_0,\dots,f_n\}$ of analytic functions is 1-independent if its set of derivatives $\{f_0',\dots,f_n'\}$ are linearly independent. More generally, we say that $\mathcal{F}$ is $k$-independent, for $k \geq 0$, if its $k$th derivatives $\{f_0^{(k)},\dots,f_n^{(k)}\}$ are linearly independent.
\end{definition}
\textit{Remark:} Since linear dependence of a set of functions implies linear dependence of their derivatives, for any $k \geq 1$, every $k$-independent set of functions is also $(k-1)$-independent. We also remark that at most one function in a set of 1-independent functions can be linear, since otherwise the derivatives would be linearly dependent.

Our focus on analytic functions restricted to a compact interval $I$ allows us to make use of the following properties. For $f$ and $g$ analytic over some open interval containing $I$, we have
\begin{enumerate}
    \item  $f\pm g$, $f\cdot g$, $f \circ g$, $f'$ and $g'$ are analytic.
    \item If $g$ and $g'$ never vanish on $I$, then $1/g$ and $g^{-1}$ are analytic. 
    \item If $f(x)=0$ for infinitely many $x \in I$, then $f\equiv0$.
\end{enumerate}
As we care only for the above behaviour of our functions over a compact interval $I$, we henceforth make the assumption that all functions said to be analytic are analytic on an open interval containing $I$. 
We will also make use of the Wro\'{n}skian to test the linear independence of functions. The Wro\'{n}skian of a set of functions $\{f_0,\dots,f_n:I \to \R\}$ is the function $W(f_0,\dots,f_n):I \to \R$ given by
$$W(f_0,f_1,\dots,f_n)=\det \begin{pmatrix}
    f_0 & f_1 &\dots &f_n\\
    f_0' &f_1' & \dots &f_n'\\
    \vdots &\vdots&\ddots&\vdots\\
    f_0^{(n-1)} &f_1^{(n-1)}&\dots & f_n^{(n-1)}
\end{pmatrix}.$$
For analytic functions, linear independence of the set $\{f_0,\dots,f_n\}$ is equivalent to the Wro\'{n}skian $W(f_0,\dots,f_n)$ not vanishing identically.

\begin{example}
We have the following examples of $k$-independent functions on any compact interval.
\begin{itemize}
\item Any $1$-convex function and the identity function are $1$-independent.
\item Any set of polynomials of different degrees at least $k+1$ are $k$-independent.
\item Any non-trivial finite subset of the trigonometric system $\{\sin (nx), \cos (nx):n \in \mathbb{N}\}$ are $k$-independent for any $k \in \mathbb{N}$.
\end{itemize}  
\end{example}
Despite the sharpness in terms of the power of $|A|$, the dependence on $K$ in Theorem \ref{petersqueeze} is far from optimal. We may use $k$-independence to improve upon this dependence when considering two functions. \begin{theorem}\label{betterKdependence} Let $A$ be a finite subset of a compact interval $I$, with $|A+A-A|=K|A|$ and let $f,g$ be $k$-independent analytic functions. Then
    $$|2^{k}f(A)-(2^{k}-1)f(A)|\cdot|2^{k}g(A)-(2^{k}-1)g(A)| \gg_{f,g,k,I} \frac{|A|^{5\cdot 2^{k-1}-2}}{|A+A-A|^{5\cdot 2^{k-1}-2k-3}} = \frac{|A|^{2k+1}}{K^{5\cdot 2^{k-1}-2k-3}}.$$
\end{theorem}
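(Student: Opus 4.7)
The plan is to prove Theorem~\ref{betterKdependence} by induction on $k$, following the strategy of Bradshaw's proof of Theorem~\ref{petersqueeze} but substituting the two-function equidistribution inequality (\ref{basicinequality2}) in place of the single-function base case.

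For the base case $k=1$, the target reduces to $|f(A)+f(A)-f(A)|\cdot|g(A)+g(A)-g(A)|\gg |A|^3$, which is precisely (\ref{basicinequality2}) provided $f\circ g^{-1}$ is strictly convex or strictly concave on the relevant interval. The $1$-independence of $\{f,g\}$ only guarantees that $f'/g'$ is non-constant, so to pass to strict monotonicity I would invoke analyticity exactly as in the preamble of the excerpt: $(f'/g')'$ has finitely many zeros in $I$, and pigeonhole supplies a subinterval $J\subseteq I$ on which $f'/g'$ is strictly monotone and a subset $A'\subseteq A$ of size $\gg_{f,g,I}|A|$. Applying (\ref{basicinequality2}) to $A'$ and enlarging back to $A$ yields the base case, which matches $|A|^{2\cdot 1+1}/K^{5\cdot 2^0-2\cdot 1-3}=|A|^3$.

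For the inductive step, write $T_\ell(X):=2^\ell X-(2^\ell-1)X$ and $\Phi_\ell(B):=|T_\ell(f(B))|\cdot|T_\ell(g(B))|$. The $k$-independence of $\{f,g\}$ is exactly $(k-1)$-independence of $\{f',g'\}$, so the inductive hypothesis is available for the derivative pair. The core technical ingredient I would need is a squeezing inequality of the shape
$$\Phi_k(A)\;\gg_{f,g,I}\;\Phi_{k-1}(A^*)\cdot |A|^2$$
for some subset $A^*\subseteq A$ with $|A^*|\gg|A|/K$ (so that $|A^*+A^*-A^*|/|A^*|\lesssim K^2$). This is the two-function analogue of the squeezing lemma used by Hanson--Roche-Newton--Rudnev and Bradshaw; it would rely on the discrete mean value theorem, since consecutive differences of $f(A)$ scale like $f'(\xi_i)(a_{i+1}-a_i)$, so $T_k(f(A))$ embeds enough information about $T_{k-1}(f'(A^*))$ paired with elements from $A+A-A$ to gain a factor of $|A|$, and an analogous embedding for $g$ contributes the second factor of $|A|$. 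Combining this squeezing estimate with the inductive hypothesis applied to $\{f',g'\}$ on $A^*$, and tracking the $K$-exponent loss introduced by $|A^*|\gg|A|/K$, the arithmetic of exponents closes: the previous $K^{5\cdot 2^{k-2}-2k-1}$ inflates by a factor of $K^{5\cdot 2^{k-2}-2}$ (from $|A^*|^{2k-1}$ and from $K^*\leq K^2$), yielding $K^{5\cdot 2^{k-1}-2k-3}$ exactly, while the $|A|$-exponent grows by $2$ from $2k-1$ to $2k+1$.

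The main obstacle is the squeezing lemma itself, namely gaining the full $|A|^2$ at each step without duplicating the $K$-loss across the two functions. A naive route that applies Theorem~\ref{petersqueeze} separately to $f$ and to $g$ would deliver a $K$-exponent of $2^{k+2}-2k-4$, much larger than the sharper $5\cdot 2^{k-1}-2k-3$ claimed here. The improvement comes precisely from exploiting the joint $(k-1)$-independence of $\{f',g'\}$ at each inductive step rather than treating the two functions in isolation; this demands a careful simultaneous analysis so that the shared combinatorial structure of $A+A-A$ contributes only once to the $K$-loss per induction step rather than twice, mirroring the way the base case (\ref{basicinequality2}) is strictly stronger than two separate applications of (\ref{basicinequality}).
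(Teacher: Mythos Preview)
Your base case and the overall inductive architecture are right, and your exponent bookkeeping closes correctly. The gap is in the inductive step: the squeezing inequality you posit,
\[
\Phi_k(A)\;\gg\;\Phi_{k-1}(A^*)\cdot |A|^2\quad\text{with }\Phi_{k-1}\text{ computed for }(f',g')\text{ on a single }A^*\subseteq A,\ |A^*|\gg |A|/K,
\]
is not available and is not the mechanism. The mean-value points $\xi_i$ do not lie in $A$, so you cannot manufacture elements of $T_{k-1}(f'(A^*))$ inside $T_k(f(A))$ from consecutive differences of $f(A)$; the objects that actually embed are values of the \emph{discrete} derivatives $\Delta_d f(x)=f(x+d)-f(x)$ evaluated at points of $A$ itself.

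Concretely, the paper does not pass to $(f',g')$ at all. After refining $A$ by equidistribution so that every consecutive gap $d$ lies among the smallest $\ll K$ positive differences (hence the multiset of gap-lengths $D$ satisfies $|D|\ll K$), it applies the squeezing lemma (Lemma~\ref{B_d}) \emph{separately} to $f$ and to $g$, obtaining two double sums
\[
\mathcal{N}_k(f(A))\gg\sum_{d\in D}\sum_{i}\mathcal{N}_{k-1}\bigl(\Delta_d f(A_d(i))\bigr),\qquad
\mathcal{N}_k(g(A))\gg\sum_{d\in D}\sum_{i}\mathcal{N}_{k-1}\bigl(\Delta_d g(A_d(i))\bigr),
\]
where $A_d(i)$ runs over truncations of the fibres $A_d=\{a_j:a_{j+1}-a_j=d\}$. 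The device that prevents the $K$-loss from doubling is a single application of Cauchy--Schwarz across these two sums, collapsing them to one sum of the square roots of the termwise products; only then is the inductive hypothesis invoked, applied to the pair $(\Delta_d f,\Delta_d g)$, which is $(k-1)$-independent by Lemma~\ref{f'todeltaf}. The remaining $K$-dependence enters through two honest upper bounds, $|A_d(i)+A_d(i)-A_d(i)|\le |A+A-A|$ and $|D|\ll K$, followed by H\"older over $d$. There is no single subset $A^*$ and no passage to continuous derivatives; the decomposition by gap-length $d$ and the Cauchy--Schwarz merge are exactly the missing ideas in your sketch.
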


Our result says that between a pair of $k$-independent functions, at least one must provide---at the expense of a square root loss from the power of $|A|$---growth with a slightly better dependence on $K$. That is,
$$\max\{|2^{k}f(A)-(2^{k}-1)f(A)|,|2^{k}g(A)-(2^{k}-1)g(A)|\} \gg_{f,g,k,I} \frac{|A|^{k+1/2}}{K^{5\cdot 2^{k-2}-k-3/2}}.$$

This is an improvement over (\ref{petersqueezed}) for sets $A$ with additive tripling $K>|A|^{\frac{1}{3\cdot 2^{k-1}-1}}$.
In general, improving the dependence on $K$ allows us to prove growth for sets with less structure. The improved dependence on $K$ here comes from introducing $K$ one step later in the induction, which we are able to do by considering one extra function. 
 Considering enough functions enables us to remove all dependence on $K$ from our inequalities. 
\begin{theorem}\label{growthforsomef}
  Let $\mathcal{F}$ be a set of $n$ 1-independent analytic functions and let $A$ be a finite subset of a compact interval $I$. Then there is a function $f \in \mathcal F$ that satisfies the bound
    $$|2^{n-1}f(A)-(2^{n-1}-1)f(A)|\gg_{\mathcal{F},I} |A|^{\phi(n)},$$
    where $\phi:\mathbb{N} \to \mathbb{R}$ satisfies the recursive formula
    $$\phi(1)=1, \quad \phi(n)=1+\frac{1}{1+\frac{1}{\phi(n-1)}} \quad \text{for } n\geq 2.$$
\end{theorem}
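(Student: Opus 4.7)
The plan is to proceed by strong induction on $n$. The base case $n=1$ is immediate: any analytic $f$ with non-vanishing derivative is injective on a monotone subinterval $I'$, and pigeonhole gives $|A \cap I'| \gg_{\mathcal F, I} |A|$, so $|f(A)| \gg |A|^{1} = |A|^{\phi(1)}$.

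For the inductive step, let $\mathcal F = \{f_1,\dots,f_n\}$ be 1-independent analytic on $I$, and write $L_k g(A) := 2^{k-1}g(A) - (2^{k-1}-1)g(A)$. By analyticity and 1-independence, each ratio $f_i'/f_j'$ ($i \neq j$) is a nonconstant analytic function with finitely many critical points; partitioning $I$ into $O_{\mathcal F,I}(1)$ subintervals on which every $f_i$ is strictly monotone and every $f_i \circ f_j^{-1}$ is strictly 1-convex or 1-concave, then pigeonholing onto the one retaining most of $A$, we may assume these properties on $I$ itself. Since post-composition with $f_n^{-1}$ preserves 1-independence and leaves every $|L_n f(A)|$ unchanged, we may further assume $\mathrm{id} \in \mathcal F$.

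The core is a dichotomy on $K := |A+A-A|/|A|$, combined with the inductive hypothesis. Set $\phi := \phi(n)$. If $K \gg |A|^{\phi - 1}$, then $|L_n A| \geq |A+A-A| \gg |A|^{\phi}$ and we conclude with $f = \mathrm{id}$. Otherwise $K \leq |A|^{\phi - 1}$. In this regime we apply the inductive hypothesis to the $n-1$ non-identity functions to obtain some $f_i$ with $|L_{n-1} f_i(A)| \gg |A|^{\phi(n-1)}$, and then upgrade to the $L_n$-level via Theorem~\ref{betterKdependence}. The key combinatorial input is that any 1-independent set of $n$ analytic functions contains a $k$-independent pair for every $k \leq n-1$: indeed, if no pair were $k$-independent, integration of the pairwise proportionality $f_i^{(k)} = c_i f_1^{(k)}$ would place all $f_i$ in a common affine space of dimension $\leq k$, forcing the $n$ derivatives to be linearly dependent. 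The exponent $\phi(n) = 1 + \phi(n-1)/(1+\phi(n-1))$ emerges from optimizing the choice of $k$ and the split of growth between the inductive and the paired estimates, the fixed point $\phi = 1 + \phi/(1+\phi)$ accounting for the golden-ratio limit.

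The main obstacle is that a single application of Theorem~\ref{betterKdependence} at the maximal level $k=n-1$ saturates the dichotomy only for $n \leq 3$; for $n \geq 4$ the exponent of $K$ in that estimate grows so fast that under $K \leq |A|^{\phi(n)-1}$ alone the resulting bound falls short of $|A|^{\phi(n)}$. The argument must therefore chain together Theorem~\ref{betterKdependence} at several intermediate levels $k$ with repeated applications of the inductive hypothesis (and of Theorem~\ref{petersqueeze}), each step introducing one new function from $\mathcal F$ to ``delay'' the $K$-factor by one level in the iterated squeezing argument---exactly the phenomenon foreshadowed in the discussion preceding the theorem. Verifying that this chain of applications balances to precisely the stated Fibonacci-type recursion, rather than a strictly weaker bound, is the technically delicate part and requires careful bookkeeping of how the $K$-exponent propagates through the successive inequalities.
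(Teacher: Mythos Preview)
Your approach diverges substantially from the paper's and carries a genuine gap for $n \geq 4$.

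The paper does not argue via a dichotomy on $K = |A+A-A|/|A|$. Instead it proves directly, by induction on $k = n-1$, a weighted product inequality
\[
\mathcal{N}_k(f_0(A)) \cdot \prod_{j=1}^k \mathcal{N}_{k-j+1}(f_j(A))^{p(j)} \;\gg_{\mathcal F, I}\; |A|^{p(k+1)},
\]
where $p(0)=p(1)=1$ and $p(j) = p(j-1) + \sum_{i<j} p(i)$. The inductive step composes everything with $f_k^{-1}$, applies the squeezing inequality (Lemma~\ref{B_d}) to each factor, combines the resulting double sums via H\"older, and invokes Lemma~\ref{f'todeltaf2} to see that the discrete derivatives $\{\Delta_d(f_i \circ f_k^{-1})\}_{i=0}^{k-1}$ again form a 1-independent family of $k$ functions, so the hypothesis applies inside each summand. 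The max bound with exponent $\phi(n) = p(n)/\sum_{i<n} p(i)$ then drops out of the product. All $n$ functions enter simultaneously with tuned weights; no threshold on $K$ appears and Theorem~\ref{betterKdependence} is never invoked.

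Your dichotomy scheme, by contrast, closes only for $n \leq 3$, as you yourself note. For $n \geq 4$ you defer to an unspecified ``chaining'' of Theorem~\ref{betterKdependence} at several levels, but you do not say which pairs are used at which levels nor how the outputs are combined, and it is far from clear that this can be made to produce exactly $\phi(n)$: the $K$-exponent $5 \cdot 2^{k-1} - 2k - 3$ in Theorem~\ref{betterKdependence} grows exponentially in $k$, so stacking such estimates incurs compounding losses that do not obviously telescope to the Fibonacci recursion. Separately, your appeal to the inductive hypothesis on the $n-1$ non-identity functions yields a bound $|L_{n-1} f_i(A)| \gg |A|^{\phi(n-1)}$ that is never fed into any later inequality --- Theorem~\ref{betterKdependence} does not accept such a lower bound as input --- so that step is presently inert. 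As written, the proposal is a proof for $n \leq 3$ and a statement of intent for larger $n$; the paper's product-inequality route is what actually delivers the recursion for all $n$.
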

\textit{Remark.} Noting that the sequence $\phi(n)$ converges to $\phi:=\frac{1+\sqrt{5}}{2}$ from below, we are unable to prove growth beyond $|A|^\phi$, but are able to get arbitrarily close by considering enough functions and long enough sums.




Hanson--Roche-Newton--Senger \cite{HansonRNSenger} show that for any convex function $f$ and finite set of real numbers $A$,
$$|f(A\pm A)+f(A\pm A)-f(A\pm A)| \gg |A|^2.$$
One would expect that this bound should be generalisable to longer sums in the same sense that Theorem \ref{basicsqueeze} is generalised to Theorem \ref{longsumconvexset}. Indeed, Bradshaw \cite{peter} conjectures that for any $k$-convex function $f$,
$$|2^kf(A-A)-(2^k-1)f(A-A)| \gg |A|^{k+1}$$
should hold for all finite subsets $A \subset \R$ and $k \in \mathbb{N}$. Previously, the most growth found for iterated sum sets of $f(A-A)$, with any number of summands and for a general class of functions, comes from the following bound due to Roche-Newton \cite[Theorem 3]{Olly}, derived from \cite[Theorem 2.6]{HansonRNSenger}. For any finite set $A \subset \R$, 
\begin{equation}\label{Olly5/2}
    |4f(A-A)-3f(A-A)|\gg|A|^{5/2}
\end{equation}
for any function $f$ satisfying a condition equivalent to $1$-independence of its discrete derivatives. For more specific functions, a lower bound of $|A|^{31/12}$ has been achieved for $7$ sums and differences of $f(A+A)$ by Roche-Newton-Wong \cite{roche2024convexity}, applying recent progress on Elekes-Szab\'{o} estimates \cite{elekes2012find,10.1215/00127094-3674103,solymosi2024improved}. Our methods allow us to generalise \eqref{Olly5/2} to demonstrate for the first time growth beyond $|A|^{5/2}$ for iterated sum sets of $f(A-A)$ for a general class of functions. 
\begin{theorem}\label{f(A-A)} Let $I \subset \R$ be a compact interval, $A \subset I$ be a finite subset and $n \in \mathbb{N}$. Suppose that $f$ is an analytic function such that for any collection of $n$ distinct positive real numbers $0<\delta_1,\dots, \delta_n < \diam (I)$, the functions 
$$\Delta_{\delta_i}f(x)\coloneqq f(x+\delta_i)-f(x), \quad 1 \leq i \leq n$$ are 1-independent, then
$$|2^nf(A-A)-(2^{n}-1)f(A-A)|\gg_{f,n,I} |A|^{
1+\phi(n)}.$$
\end{theorem}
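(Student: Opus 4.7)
The approach is to reduce to Theorem \ref{growthforsomef} via a $1$-independent family built from discrete derivatives of $f$, and then to recover an extra factor of $|A|$ by exploiting the additional Minkowski summand available in the target set.

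\emph{Setup and family.} Translate $A$ so that $0 \in A$ (this does not alter any sumset size). By analyticity the auxiliary Wronskians that appear below have only finitely many zeros in $I$, so after pigeonholing to a suitable subinterval (which costs only constants) we may assume every $1$-independence condition invoked holds uniformly on $I$. Fix $n$ distinct positive elements $0 < b_1 < \cdots < b_n$ of $A$, and define the analytic functions $F_i(x) \coloneqq f(x) - f(x - b_i)$ for $i = 1,\dots,n$.

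\emph{$1$-independence of $\{F_i\}$.} A linear relation $\sum_{i=1}^n c_i F_i'(x) \equiv 0$, after the substitution $x = y + b_n$, rearranges into
\[
\bigl(\textstyle\sum_i c_i\bigr)\,\Delta_{b_n} f'(y) \;-\; \sum_{i < n} c_i\, \Delta_{b_n - b_i}f'(y) \;=\; 0.
\]
The $n$ shifts $b_n,\ b_n - b_1,\dots,\ b_n - b_{n-1}$ are distinct and positive, so by hypothesis the $n$ discrete derivatives $\Delta_{b_n}f',\ \Delta_{b_n-b_1}f',\dots,\ \Delta_{b_n-b_{n-1}}f'$ are linearly independent. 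This forces $\sum_i c_i = 0$ together with $c_1 = \cdots = c_{n-1} = 0$, hence $c_n = 0$ as well, and $\{F_i\}_{i=1}^n$ is $1$-independent.

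\emph{Applying Theorem \ref{growthforsomef} and preliminary bound.} Invoking Theorem \ref{growthforsomef} for the family $\{F_i\}_{i=1}^n$ on $A$ yields some $i^* \in \{1,\dots,n\}$ with $|2^{n-1}F_{i^*}(A) - (2^{n-1}-1)F_{i^*}(A)| \gg_{f,n,I} |A|^{\phi(n)}$. Because $0 \in A$, for each $a \in A$ we have $a, a - b_{i^*} \in A - A$, so $F_{i^*}(A) \subseteq f(A-A) - f(A-A)$. A direct count of positive and negative summands then places the above sumset inside $(2^n-1)f(A-A) - (2^n-1)f(A-A)$, which after translation by $f(0) \in f(A-A)$ sits inside $2^n f(A-A) - (2^n-1)f(A-A)$, giving the preliminary bound $|A|^{\phi(n)}$.

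\emph{Main obstacle: the extra $|A|$.} This preliminary estimate is short of the target $|A|^{1+\phi(n)}$ by a factor of $|A|$. The plan is to exploit the extra copy of $f(A-A)$ available as a Minkowski summand in the target: writing $X$ for the $|A|^{\phi(n)}$-sized set produced above, one has $X + f(A-A) \subseteq 2^n f(A-A) - (2^n-1)f(A-A)$, and the goal is to show $|X + f(A-A)| \gg |A|^{1+\phi(n)}$. This requires a non-trivial upper bound on the additive energy $E(X, f(A-A))$, most plausibly obtained from injectivity of the parametrised map $(a,b) \mapsto f(a) - f(a-b)$ on suitable level sets, exploiting the analytic structure and the $1$-independence hypothesis. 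This step, which generalises the $n = 2$ argument of Roche-Newton \cite{Olly} producing the bound $|4f(A-A) - 3f(A-A)| \gg |A|^{5/2}$, is the technical crux of the proof.
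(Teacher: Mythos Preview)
Your proposal has a genuine gap: you yourself flag that the step recovering the extra factor of $|A|$ is ``the technical crux'' and you do not carry it out. The route you sketch---showing $|X+f(A-A)|\gg |A|\cdot|X|$ via an additive-energy bound on $E(X,f(A-A))$---would require $E(X,f(A-A))\ll |X||f(A-A)|$, and nothing in your setup gives any control over how the set $X$ (an iterated sumset of some $F_{i^*}(A)$) interacts additively with $f(A-A)$. There is no injectivity or convexity mechanism visible here that would yield such an energy bound, and the analogy with Roche-Newton's $n=2$ argument does not transfer: that argument does not proceed by bounding the energy of a black-box set $X$ against $f(A-A)$.

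The paper's proof obtains the extra $|A|$ by an entirely different mechanism, namely \emph{squeezing into disjoint intervals}. One picks an $(n{+}1)$-tuple $(a_0,\dots,a_n)\in A^{n+1}$ of minimal diameter, so that $\gg|A|$ translates $[a_0,a_n]-h$, $h\in H\subset A$, are pairwise disjoint. Setting $\delta_l=a_l-a_0$, the family $\{\Delta_{\delta_l}f\}_{l=1}^n$ is $1$-independent by hypothesis, and one applies not Theorem~\ref{growthforsomef} once, but its stronger $\mathcal{N}$-form (the sumset is trapped inside the span of its generating set) to each of the nested sets $a_0-H(i)$ of size $i-1$. For each $i$ this produces $\gg i^{\phi(n)}$ elements of $2^n f(A-A)-(2^n-1)f(A-A)$ lying, via the squeezing lemma, in the interval $(f(a_0-h_i),f(a_n-h_i))$; disjointness of these intervals over $i$ lets one sum to obtain $\sum_{i\le|A|} i^{\phi(n)}\gg|A|^{1+\phi(n)}$. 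The key point you are missing is that the extra $|A|$ comes from \emph{repeated} application of Theorem~\ref{growthforsomef} to a family of sets of growing size, combined with spatial disjointness---not from a single global application followed by an energy estimate.
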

For any $n\geq 1$, such a function does exist. For instance, we will prove that the property in Theorem \ref{f(A-A)} holds for any polynomial of degree at least $n+1$.
\begin{corollary}\label{polynomials}
   Let $I$ be a compact interval and $n \in \mathbb{N}$. Let $A \subset I$ be a finite non-empty subset and $f$ be any polynomial of degree $m\geq n+1$. Then we have  
   $$|2^nf(A-A)-(2^{n}-1)f(A-A)|\gg_n |A|^{1+\phi(n)}.$$
\end{corollary}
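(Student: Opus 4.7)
The plan is to verify the hypothesis of Theorem \ref{f(A-A)} directly for any polynomial $f$ of degree $m \geq n+1$: given distinct positive reals $\delta_1, \ldots, \delta_n$, I will show that the functions $\Delta_{\delta_i} f(x) = f(x+\delta_i) - f(x)$ are 1-independent. By Definition \ref{mutcon}, this amounts to showing that their derivatives $g(x+\delta_i) - g(x)$, where $g := f'$, are linearly independent over $\mathbb{R}$. Note that $g$ is a polynomial of degree $m - 1 \geq n$, so the leading derivative $g^{(m-1)}$ is a non-zero constant.

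Suppose $\sum_{i=1}^n c_i \bigl( g(x+\delta_i) - g(x) \bigr) = 0$ identically in $x$. Taylor expanding gives $g(x+\delta) - g(x) = \sum_{j=1}^{m-1} \frac{g^{(j)}(x)}{j!}\delta^j$, so the assumed identity rearranges to
$$\sum_{j=1}^{m-1} \left( \sum_{i=1}^n c_i \delta_i^{\,j} \right) \frac{g^{(j)}(x)}{j!} = 0.$$
Since $g^{(1)}, \ldots, g^{(m-1)}$ are polynomials of strictly decreasing degrees $m-2, m-3, \ldots, 0$, they are linearly independent, and hence $\sum_{i=1}^n c_i \delta_i^{\,j} = 0$ for every $j = 1, \ldots, m-1$. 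Restricting to the first $n$ of these equations (possible because $m - 1 \geq n$) gives a linear system whose coefficient matrix factors as $V' \cdot \diag(\delta_1, \ldots, \delta_n)$, with $V'$ the Vandermonde matrix in $\delta_1, \ldots, \delta_n$. Its determinant equals $\prod_i \delta_i \cdot \prod_{i<k}(\delta_k - \delta_i)$, which is non-zero because the $\delta_i$ are positive and distinct. Therefore $c_1 = \cdots = c_n = 0$, establishing 1-independence.

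Corollary \ref{polynomials} then follows by a direct application of Theorem \ref{f(A-A)} to $f$. The argument presents no serious obstacle; the only moving part is the bookkeeping that ensures the degree hypothesis $m \geq n+1$ provides exactly enough Taylor coefficients for the Vandermonde step to succeed.
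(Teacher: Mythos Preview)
Your proof is correct and follows essentially the same route as the paper's: both verify the hypothesis of Theorem~\ref{f(A-A)} by expanding $\Delta_{\delta_i} f'$ and reducing the linear-independence claim to the nonvanishing of a Vandermonde determinant in the distinct positive $\delta_i$. Your Taylor-expansion framing (writing $g(x+\delta)-g(x)=\sum_{j\ge 1}\frac{g^{(j)}(x)}{j!}\delta^j$ and using that $g^{(1)},\dots,g^{(m-1)}$ have distinct degrees) is a bit tidier than the paper's direct binomial computation, but the underlying argument is the same.
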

\textit{Remark:} Our methods can be trivially adapted to prove the same results with $A-A$ replaced by $A-B$ (and, in the following subsection, with $A\times A$ replaced by $A \times B$), as long as $|B|=\Theta(|A|)$. We have chosen to focus only on the symmetric case to simplify the exposition.
\subsection{Additive growth for the set of angles in a Cartesian product}
It is an easy exercise in discrete geometry to show, using Beck's Theorem \cite{beck1983lattice}, that the set of distinct angles $\mathcal{A}(P)$ formed by triples of points from a finite non-collinear\footnote{non-collinearity is necessary to avoid the trivial answer of 1} point set $P$ in the plane $\R^2$ satisfies the sharp lower bound 
\begin{equation}\label{beckalicious}
    |\mathcal{A}(P)|\gg|P|.
\end{equation}
 The structural variant of this problem, asking about the qualitative structure of such sets, is much harder. We are aware of essentially only two examples proving the sharpness of \eqref{beckalicious}, both with angles in arithmetic progression. First, consider $2$ points $p,q$ in the plane, together with $n-2$ points arranged on the perpendicular bisector of the line segment between them, forming an arithmetic progression in directions from $p$ and $q$ to points on the bisector. Second, consider the vertices of a regular $(n-1)$-gon, together with its centre. See Figure \ref{fig1}.
\begin{figure}[h]
    \centering
    \begin{minipage}{0.49\textwidth}
    \centering
        \begin{tikzpicture}[scale=0.6,rotate=90,transform shape]
    \coordinate (A) at (-2,0);
    \coordinate (B) at (2,0);
    
    \fill (A) circle (4pt) node[below] {};
    \fill (B) circle (4pt) node[below] {};
    
    \coordinate (M) at ($ (A)!0.5!(B) $);
    \draw[thick] (M) -- ++(0,8) coordinate (T) -- ++(0,-16) coordinate (B1); 

    \def\n{5}  
    \def\d{4}   

    \foreach \i in {0,...,\n} {
        \pgfmathsetmacro{\theta}{15*\i} 
        \pgfmathsetmacro{\h}{(\d/2) * tan(\theta)} 

        \coordinate (P\i) at (M |- 0,\h);  
        \coordinate (Q\i) at (M |- 0,-\h); 
        
        \fill (P\i) circle (4pt);
        \fill (Q\i) circle (4pt);

        \draw[dotted] (P\i) -- (A);
        \draw[dotted] (P\i) -- (B);
        \draw[dotted] (Q\i) -- (A);
        \draw[dotted] (Q\i) -- (B);
    }
\end{tikzpicture}

    \end{minipage}
    \hfill
    \begin{minipage}{0.49\textwidth}
    \centering
\begin{tikzpicture}[scale=0.6]
    \def\r{3}  

    \draw[thick] (0,0) circle (\r);

    \fill (0,0) circle (4pt);

    \foreach \i in {0,30,...,330} {
        \coordinate (P) at ({\r*cos(\i)}, {\r*sin(\i)});
        
        \fill (P) circle (4pt);
        
        \draw[dotted] (0,0) -- (P);
    }

\end{tikzpicture}
    \end{minipage}
\caption{Point sets $P$ determining $\Omega(|P|)$ distinct angles}
\label{fig1}
\end{figure}
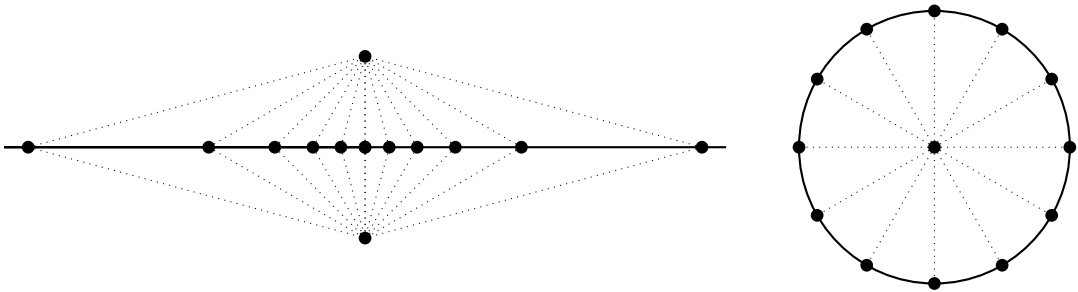

Konyagin-Rudnev-Passant \cite{KPR} conjecture that all such examples come, essentially, from one of these sets. Towards this structural problem, it is pertinent to prove, for other types of sets, that $|\mathcal{A}(P)|\gg |P|^{1+c}$ for some absolute constant $c$. Recent work by Konyagin, Passant and Rudnev \cite{KPR} proves that for non-co-circular point sets $P$ in convex position satisfy the bound $$|\mathcal{A}(P)| \gg_\epsilon |P|^{1+3/23-\epsilon}$$
and even more recently, Roche-Newton \cite{Olly} proved that any Cartesian product $A\times A \subset \R^2$ must satisfy
\begin{equation}\label{ollybound}
    |\mathcal{A}_p(A\times A)| \gg |A|^{2+{1/14}}
\end{equation}
where $\mathcal{A}_p(P)$ denotes the set of \textit{pinned} angles at $p$ i.e. the set of angles formed between $p$ with any other two points in $A\times A$. This is done using Theorem \ref{Olly5/2} for the function $f(x)=\arctan(e^x)$ to show that $\mathcal A(A\times A)$ exhibits additive growth upon which the Pl\"{u}nnecke-Ruzsa inequality yields the stated bound. We check that the condition given in Theorem \ref{f(A-A)} for the function $f(x)=\arctan(e^x)$ with $n=3$, and substitute the resulting bound into the first part of Roche-Newton's proof for an improved bound on the additive growth of $\mathcal{A}(A \times A)$.
\begin{corollary}\label{introangles} Let $A\subset\R$ be a finite set. Then for any point $p \in A\times A$ we have
    $$|8\mathcal{A}_p(A\times A)|\gg |A|^{13/5}=|A|^{2.6}.$$ 
\end{corollary}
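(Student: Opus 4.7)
The plan is to apply Theorem \ref{f(A-A)} with $n = 3$ and $f(x) = \arctan(e^x)$, and to substitute the resulting estimate into the reduction from pinned angles to iterated sums of $f$ on a difference set carried out in Roche-Newton \cite{Olly}. The recursion immediately gives $\phi(2) = \tfrac{3}{2}$ and $\phi(3) = \tfrac{8}{5}$, so the target exponent is $1+\phi(3)=\tfrac{13}{5}$.

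The substantive step is verifying the 1-independence hypothesis of Theorem \ref{f(A-A)}. Since $f'(x) = \tfrac{1}{2}\,\mathrm{sech}(x)$, we have $(\Delta_{\delta} f)'(x) = \tfrac{1}{2}\bigl(\mathrm{sech}(x+\delta) - \mathrm{sech}(x)\bigr)$, and any linear relation $\sum_{i=1}^{3} c_i\,(\Delta_{\delta_i} f)'(x) \equiv 0$ rearranges to
$$-\Bigl(\sum_{i=1}^{3} c_i\Bigr)\mathrm{sech}(x) + c_1\,\mathrm{sech}(x+\delta_1) + c_2\,\mathrm{sech}(x+\delta_2) + c_3\,\mathrm{sech}(x+\delta_3) \equiv 0,$$
a linear relation among the four shifts with parameters $0, \delta_1, \delta_2, \delta_3$. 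Continuing meromorphically to $\mathbb{C}$, $\mathrm{sech}(x+\delta)$ has a simple pole at $x = \tfrac{i\pi}{2} - \delta$; these four pole locations are distinct since $0,\delta_1,\delta_2,\delta_3$ are distinct, and inspecting the residue at each forces every coefficient to vanish. Hence the $\Delta_{\delta_i} f$ are 1-independent, and Theorem \ref{f(A-A)} will yield $|8f(B-B) - 7f(B-B)| \gg_{f,I} |B|^{13/5}$ for any finite $B$ in a suitable compact interval $I$.

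For the reduction, I fix $p = (p_1, p_2) \in A \times A$. Pinned angles are invariant under uniform rescaling and translation of the plane, so after normalization together with a pigeonhole into one of the four coordinate quadrants relative to $p$ (and a standard dyadic partition of $|a-p_i|$ to confine the logarithms to a compact interval independent of $A$), we may pass to subsets $A_1, A_2 \subseteq A$ of size $\gg |A|$ on which each direction angle from $p$ takes the form
$$\arctan\!\left(\frac{a_2 - p_2}{a_1 - p_1}\right) = f\bigl(\log(a_2 - p_2) - \log(a_1 - p_1)\bigr), \quad a_i \in A_i.$$
Writing $B_i := \log(A_i - p_i) \subset I$, the set $D$ of direction angles from $p$ contains $f(B_2 - B_1)$, and since every pinned angle is a difference of two direction angles, $\mathcal A_p \supseteq D - D \supseteq f(B_2 - B_1) - f(B_2 - B_1)$. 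Taking $8$-fold sumsets and using that $|8X-8X| \geq |8X-7X|$ for any $X$ (by translating by any fixed element of $X$) gives $|8\mathcal A_p| \geq |8f(B_2-B_1) - 7f(B_2-B_1)|$, and applying the asymmetric form of Theorem \ref{f(A-A)} noted in the remark after Corollary \ref{polynomials}, with $|B_1| \asymp |B_2| \asymp |A|$, delivers the required $|8\mathcal A_p| \gg |A|^{13/5}$. The main obstacle is the 1-independence check above; the rest is essentially formal.
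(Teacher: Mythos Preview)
Your proof follows the same architecture as the paper's: reduce pinned angles at $p$ to differences of direction angles $\arctan(e^{b_2-b_1})=f(b_2-b_1)$, then invoke Theorem~\ref{f(A-A)} with $n=3$. The reduction step is essentially identical (the paper works symmetrically with a single $A''=\log(A'\setminus\{0\})$ after translating $p$ to the origin, rather than your asymmetric pair $B_1,B_2$, a cosmetic difference covered by the remark after Corollary~\ref{polynomials}; both treatments are equally informal about confining the logarithms to a fixed compact interval). Where you genuinely diverge is in verifying the 1-independence hypothesis. The paper computes the $3\times 3$ Wro\'{n}skian $W(\Delta_{\delta_1}f',\Delta_{\delta_2}f',\Delta_{\delta_3}f')$ explicitly with a computer algebra system, exhibiting it as a rational function of $e^x$ whose numerator coefficients are visibly nonzero when the $\sigma_i=e^{\delta_i}$ are distinct and greater than $1$. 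Your argument is cleaner and more conceptual: since $f'(x)=\tfrac12\,\mathrm{sech}(x)$, the question becomes linear independence of distinct real translates of $\mathrm{sech}$, which is immediate from the identity theorem together with the fact that $\mathrm{sech}(z+\delta)$ has a simple pole at $z=\tfrac{i\pi}{2}-\delta$ not shared by any other translate. Beyond elegance, your route works verbatim for any number of distinct shifts, so it in fact verifies the hypothesis of Theorem~\ref{f(A-A)} for $\arctan(e^x)$ for every $n$, whereas the paper's explicit computation is tied to $n=3$.
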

 Unfortunately, this bound does not allow us to improve upon \eqref{ollybound} since the maximum length of our sums for $n>2$ renders the Pl\"{u}nnecke-Ruzsa inequality counterproductive.

\subsection{Notation}
We will make extensive use of Vinogradov's symbols $\ll$ and $\gg$ to suppress constant terms in inequalities: We write $X \ll_{z} Y$, or equivalently $Y \gg_{z} X$, to mean $X \geq C({z}) |Y|$ where $C(z)$ is some positive constant depending on the parameter $z$. We further write $X = O_{z}(Y)$ or $Y=\Omega_z(X)$ to mean $X \ll_{z} Y$ and $X=\Theta(Y)$ if $Y\ll_z X\ll_z Y$.
\subsection{Structure of paper}
In \S \ref{prelims} we familiarise the reader with previous iterations of the squeezing argument due to Ruzsa-Shakan-Solymosi-Szemer\'{e}di \cite{shakanetc} and Hanson--Roche-Newton--Rudnev \cite{MishaBrandonOlly}, before proving various lemmata extending these tools to the realm of 1-independent functions. We then restate the equidistribution arguments of Bradshaw \cite{peter} used to prove \ref{petersqueeze} and reprove a version of \eqref{basicinequality}, which is later used as a base case in many of our arguments. In \S \ref{proofs}, we prove Theorems \ref{betterKdependence}, \ref{growthforsomef} and \ref{f(A-A)} and in \S \ref{corollaries} we prove Corollaries \ref{polynomials} and \ref{introangles}.
\subsection{Acknowledgments}
The author would like to thank Misha Rudnev for many valuable discussions. He would also like to thank David Ellis for helpful comments, Oliver Roche-Newton for pointing to the reference \cite{roche2024convexity} and Oleksiy Klurman and Besfort Shala for pointing out the relevance of Hurwitz's Theorem.  
\section{Preliminaries}\label{prelims}
We begin this section by recapping the squeezing arguments used to prove Theorems \ref{basicsqueeze}, \ref{longsumconvexset} and \ref{petersqueeze}. Readers already familiar with these techniques might wish to skip ahead to \S \ref{mutualconvexprelims}.
\subsection{Squeezing}
Our proofs are based heavily on the \textit{squeezing} methodology, whose genealogy is traced back to Garaev \cite{garaev2000lower}, via work of Bradshaw \cite{peter}, Hanson--Roche-Newton--Rudnev \cite{MishaBrandonOlly}, and Ruzsa-Shakan-Solymosi-Szemeredi \cite{shakanetc}. The basic idea behind this method is that convexity of a set $A=\{a_1<\dots<a_{|A|}\}$ allows us to construct $i$ elements of $A+A-A$ within the $i$th consecutive interval $(a_i,a_{i+1}]$ by merely noting that every interval to its left is shorter and thus can be `\textit{squeezed} inside'. See the following proof of Theorem \ref{basicsqueeze}, appearing first in \cite{shakanetc}.
\begin{proof}[Proof of Theorem \ref{basicsqueeze}]  
Write $A=\{a_1<...<a_N\}$ and pick some $i \in {1,...,N}$. By convexity of $A$ we have, for all $j\leq i$,    
    $$0 < a_{j+1}-a_{j} \leq a_{i+1}-a_{i}.$$
Rearranging, we get
    $$ a_i < a_i+(a_{j+1}-a_{j}) \leq a_{i+1}. $$
But $a_i+a_{j+1}-a_{j}$ is an element of $A+A-A$, giving us at least $i$ elements of $A+A-A$ in the interval $(a_i,a_{i+1}]$. Hence
$$|A+A-A|=\sum_{i=1}^{N-1}|(a_i,a_{i+1}]\cap(A+A-A)|>\sum_{i=1}^{N-1}i \gg |A|^2$$
as desired.
\end{proof}
The proof of Theorem \ref{longsumconvexset} given in \cite{MishaBrandonOlly} merely requires an iterated application of the above squeezing argument. Previously, for each $i \in \{1,...,|A|\}$, we identified $i$ elements of $A-A$ that can be squeezed into intervals $(a_i,a_{i+1}]$, each producing a unique element of $A+A-A$. 

    Let us consider a second-order iteration of this argument. For a $2-$convex set $A$, denoting the set of consecutive differences by $D \coloneqq \{d_i=a_{i+1}-a_i:1\leq i \leq |A|\}$, we must have the following inequalities
    \begin{align*}
        d_1<d_2<&...<d_{N-1}, \\
        d_2-d_1<d_3-d_2<&...<d_{N-1}-d_{N-2}.
    \end{align*}

We now squeeze elements of $(A-A)-(A-A)$ into the interval $(a_i+d_j,a_i+d_{j+1}]$. For $j \leq i$ fixed, we have for each $k\leq j$ that
$$ a_i+d_j < a_i+d_j+(d_{k+1}-d_k) \leq a_i+d_{j+1},$$
accounting for $\sum_{i=1}^{|A|} \sum_{j \leq i} j = \Omega( |A|^3)$ elements of the set $$A+(A-A)+((A-A)-(A-A))=4A-3A.$$
That is to say, the previously established fact that $$|(2D- D)\cap (a_2-a_1, a_N-a_{N-1})| \gg |D|^2 \approx N^2$$ allows us to find $\Omega(N^2)$ elements from the set $$a_i+2D-D \subseteq a_i+(A-A)+(A-A)-(A-A)=a_i+3A-3A$$ in the interval $(a_i,a_{i+1}]$, accounting for $\Omega(\sum_{i=1}^{N}N^2)=\Omega(N^3)$ elements of $4A-3A$.

It is now clear how one generalises this into an induction argument: the following proof appears in \cite{MishaBrandonOlly}.
\begin{proof}[Proof of Theorem \ref{longsumconvexset}]
The inductive hypothesis tells us that $$|(2^kD - (2^{k}-1)D)\cap (a_2-a_1,a_N-a_{N-1})| \gg N^{k+1},$$ which allows us to find $\Omega(N^{k+1})$ elements from the set $$a_i+2^kD - (2^{k}-1)D \subseteq a_i+(2^{k+1}-1)A - (2^{k+1}-1)A$$ in the interval $(a_i,a_{i+1}]$, accounting for $\Omega(\sum_{i=1}^N N^{k+1})=\Omega(N^{k+2})$ elements of $2^{k+1}A - (2^{k+1}-1)A$. Combined with the base case of Theorem \ref{basicsqueeze}, this proves the theorem. 
\end{proof}

To extend this argument to images of $1$-convex functions, we define the discrete $d$-derivative of $f$ as follows.
\begin{equation}
    \Delta_df(x)\coloneqq f(x+d)-f(x).
\end{equation}

If $f$ is $1$-convex, then for any $d>0$, $\Delta_df(x)$ is an increasing function. Indeed, by the fundamental theorem of calculus,
\begin{equation}\label{FTC}
   \Delta_df(x) = \int_x^{x+d}f'(t)dt 
\end{equation} and since $f'$ is increasing, so too must be $\Delta_df$. Thus, if $A=\{a_1<\dots<a_{|A|}\}$, we must have
$$f(a_i+d)-f(a_i)<f(a_j+d)-f(a_j)$$
for all $j>i$. Hence, the terms
$$f(a_i)+f(a_j+d)-f(a_j)$$ are all distinct elements of $(f(a_i),f(a_i+d)].$

We therefore have the following result, known as `the squeezing lemma' \cite{MishaBrandonOlly,peter}.

\begin{lemma}[The Squeezing Lemma]\label{squeezing lemma}
    Let $f$ be a 1-convex function. Then for any $d>0$, $a \in \R$ and any set $S$ with $\sup S < a$, the set $\Delta_{d,a}f(S)=f(a)+\Delta_df(S)$ is entirely contained in the interval $(f(a),f(a+d)).$
\end{lemma}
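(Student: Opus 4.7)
The plan is straightforward: unpack the definition $\Delta_{d,a}f(S) = f(a) + \Delta_d f(S)$ and verify both endpoints of the target interval $(f(a), f(a+d))$ pointwise in $s \in S$. The key ingredient, already flagged in equation (\ref{FTC}) of the excerpt, is that 1-convexity of $f$ forces the discrete derivative $\Delta_d f(x) = f(x+d) - f(x)$ to be a strictly increasing function of $x$. Indeed, writing $\Delta_d f(x) = \int_x^{x+d} f'(t)\,dt$ as the integral of the strictly increasing $f'$ over a window of fixed width $d$, translating the window to the right strictly increases the integral.

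With this monotonicity in hand, fix any $s \in S$. For the lower endpoint, the inequality $f(a) + \Delta_d f(s) > f(a)$ reduces to $\Delta_d f(s) > 0$; this is immediate from strict monotonicity of $f$ itself (1-convexity includes 0-convexity) together with $d > 0$. For the upper endpoint, rewrite $f(a+d) = f(a) + \Delta_d f(a)$, so the bound $f(a) + \Delta_d f(s) < f(a+d)$ reduces to $\Delta_d f(s) < \Delta_d f(a)$; since $s \leq \sup S < a$, this is exactly the strict monotonicity of $\Delta_d f$ applied at $s$ and $a$. Taking the union over all $s \in S$ then places every element of $f(a) + \Delta_d f(S)$ strictly inside $(f(a), f(a+d))$, as claimed.

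There is essentially no obstacle in this lemma; the entire content is to package the two one-sided monotonicities (of $f$ and of $\Delta_d f$) into the geometric picture of squeezed intervals. The only point worth a moment of care is that strictness of the outer inequality $\Delta_d f(s) < \Delta_d f(a)$ relies on $\sup S < a$ being strict, which is precisely the hypothesis provided. This is why the lemma is suitable as the atomic building block for the inductive squeezing arguments used to prove the longer-sum estimates appearing later in the paper.
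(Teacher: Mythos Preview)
Your proof is correct and follows essentially the same approach as the paper: the argument immediately preceding the lemma statement already records that $\Delta_d f$ is strictly increasing via \eqref{FTC}, and your two one-line bounds for the endpoints are exactly the intended content. The paper does not supply a separate formal proof beyond that preamble, so your write-up is, if anything, a slightly more explicit rendering of the same idea.
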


\subsection{$k$-Independence lemmata}\label{mutualconvexprelims} 
To extend squeezing techniques to the realm of $k$-independent functions, we require the following lemmas.

\begin{lemma}\label{fginv} Let $\mathcal{F}$ be any set of 1-independent analytic functions. Then for any ordering $\{f_i\}_{i=1}^n$ of $\mathcal{F}$, the set $\{(f_i\circ f_n^{-1})'\}_{i=1}^n$ is a set of 1-independent analytic functions.  
\end{lemma}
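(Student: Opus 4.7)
The plan is to derive the claimed independence from the $1$-independence of $\mathcal{F}$ via a direct chain-rule computation that multiplies through by a non-vanishing analytic factor.

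First I would pin down where the composition is even meaningful. Since $\mathcal{F}$ is $1$-independent, $f_n'$ is not identically zero on $I$ (otherwise $f_n$ would be constant, forcing $f_n' = 0$ into the linear combination), so by analyticity $f_n'$ has only finitely many zeros in $I$. Removing these and restricting to a subinterval $J \subseteq I$ on which $f_n'$ is non-vanishing, the analytic inverse function theorem ensures $f_n^{-1} : f_n(J) \to J$ is a well-defined analytic bijection. Hence each $f_i \circ f_n^{-1}$ is analytic on $f_n(J)$, and by the chain rule
$$(f_i \circ f_n^{-1})'(x) = \frac{f_i'(f_n^{-1}(x))}{f_n'(f_n^{-1}(x))},$$
which is analytic on $f_n(J)$ as $f_n' \circ f_n^{-1}$ is nowhere vanishing.

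Next I would verify linear independence. Suppose there exist constants $c_1, \dots, c_n$ with
$$\sum_{i=1}^n c_i \,(f_i \circ f_n^{-1})'(x) \;=\; 0 \quad \text{for all } x \in f_n(J).$$
Multiplying through by the nowhere-vanishing function $f_n'(f_n^{-1}(x))$ and substituting $y = f_n^{-1}(x)$ converts this identity to $\sum_{i=1}^n c_i f_i'(y) = 0$ for all $y \in J$. Since each $f_i'$ is analytic on an open interval containing $I$ and $J$ is an open subinterval, the identity theorem extends the vanishing to all of $I$. The $1$-independence of $\mathcal{F}$ then forces $c_i = 0$ for every $i$, giving the desired independence.

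The only real obstacle is ensuring that $f_n^{-1}$ exists as an analytic function, which is handled by restricting to a subinterval where $f_n'$ does not vanish; this is already standard in the conventions set up earlier in the paper. Everything else is the formal observation that multiplication by a non-vanishing analytic function, and composition with an analytic bijection, both preserve linear independence among analytic functions, so the independence of $\{f_i'\}$ transfers directly to the independence of $\{(f_i \circ f_n^{-1})'\}$.
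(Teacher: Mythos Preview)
You have proved the wrong statement. The lemma asserts that the set $\mathcal{G}=\{(f_i\circ f_n^{-1})'\}$ is \emph{1-independent}, which by Definition~\ref{mutcon} means that the \emph{derivatives} of the elements of $\mathcal{G}$, namely $\{(f_i\circ f_n^{-1})''\}_{i=1}^{n-1}$, are linearly independent. What you established instead is that the functions $(f_i\circ f_n^{-1})'$ themselves are linearly independent; this is only the assertion that $\{f_i\circ f_n^{-1}\}$ is 1-independent, which is strictly weaker and is not what the subsequent Lemma~\ref{f'todeltaf2} needs.

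The required second-derivative independence is not a formality and does not follow from your chain-rule identity by simply multiplying through by a non-vanishing factor. One computes
\[
(f_i\circ f_n^{-1})'' \;=\; \frac{W(f_n',f_i')}{(f_n')^3}\circ f_n^{-1},
\]
so a linear combination $\sum_{i=1}^{n-1} a_i (f_i\circ f_n^{-1})''$ collapses to $\bigl((f_n')^{-3}\,W\bigl(f_n',\sum_{i=1}^{n-1} a_i f_i'\bigr)\bigr)\circ f_n^{-1}$. For this Wronskian to vanish identically one would need $\sum_{i=1}^{n-1} a_i f_i'$ to be a scalar multiple of $f_n'$, which the linear independence of $\{f_1',\dots,f_n'\}$ rules out unless all $a_i=0$. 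This is the argument the paper gives, and it is the missing ingredient in your proposal. Note also that the index $i=n$ must be dropped at this stage, since $(f_n\circ f_n^{-1})''\equiv 0$.
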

\begin{proof}
Recall from the definition of $1$-independence for analytic functions that the derivatives $\{f_i'\}_{i=1}^n$ are non-vanishing and linearly independent. Since the derivatives are non-vanishing, the functions themselves must be strictly monotone and hence invertible. By the chain rule, we have for any $f,g \in \mathcal{F}$ that
$$(f\circ g^{-1})'=(f'/g') \circ g^{-1}.$$
Which cannot vanish due to the non-vanishing of $f'$.

It just remains to prove prove that $\{(f_i\circ f_n^{-1})''\}_{i=1}^{n-1}$ is a set of linearly independent functions. Noting from the chain rule that we have $$(f\circ g^{-1})''=\frac{W(g',f')}{(g')^3} \circ g^{-1}$$ for any pair of functions $f,g \in \mathcal{F}$, we consider a linear combination of our functions
\begin{align*}
\sum_{i=1}^{n-1}a_i(f_i\circ f_n^{-1})''&=\left(\frac{1}{(f_n')^3}\sum_{i=1}^{n-1} a_iW(f_n',f_i')\right) \circ f_n^{-1}\\
&=\left(\frac{1}{(f_n')^3}\sum_{i=1}^{n-1} a_i(f_i''\cdot f_n'-f_n''\cdot f_i')\right)\circ f_n^{-1}\\
&=\left(\frac{1}{(f_n')^3}\left(\left(\sum_{i=1}^{n-1}a_i f_i''\right)\cdot f_n'-f_n''\cdot\left(\sum_{i=1}^{n-1}a_if_i'\right)\right)\right)\circ f_n^{-1}\\
&=\left(\frac{1}{(f_n')^3}W\left(f_n',\left(\sum_{i=1}^{n-1} a_if_i'\right)\right)\right)\circ f_n^{-1}
\end{align*}
with the $a_i \in \R$ constants. By linear independence of the set of functions $\{f_i'\}_{i=1}^n$, the Wro\'{n}skian in the last line can only vanish everywhere if $a_i=0$ for every $1\leq i \leq n-1$, and by injectivity of $f_n$, this is necessary for the whole expression to vanish. Whence $\{(f_i\circ f_n^{-1})''\}_{i=1}^{n-1}$ is a linearly independent set of functions and $\{(f_i \circ f_n^{-1})'\}_{i=1}^n$ is 1-independent. 
\end{proof}

We will later need the above recursive property to hold with the derivatives $f'(x)$ replaced by discrete derivatives $\Delta_df(x) \coloneqq f(x+d)-f(x)$, with small enough $d$.

To prove that we can do this, we will use the following standard result from complex analysis known as Hurwitz's Theorem. See \cite[Theorem 2.5]{markushevich1965theory}. 
\begin{lemma}[Hurwitz's Theorem]
    Given a closed contour $L \subset \mathbb{C}$ with interior $K$, suppose that $(F_m)_{m=1}^\infty$ is a sequence of analytic functions on $K$ which converges uniformly on $K$ to a function $F\not\equiv 0$ that does not vanish on $L$. Then there is an integer $M=M(L)>0$ such that for all $m>M$ the functions $F_m$ and $F$ have the same number of zeros in $K$.  
\end{lemma}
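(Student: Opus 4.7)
The plan is to derive Hurwitz's Theorem from Rouch\'{e}'s Theorem, using uniform convergence to turn the non-vanishing of $F$ on the contour into a pointwise domination of the error $F_m - F$. This is the standard route in complex analysis, so the proof should be short.

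First, I would exploit compactness: the contour $L$ is compact, and $F$ is continuous and strictly non-vanishing on $L$, so
$$\epsilon := \min_{z \in L} |F(z)| > 0.$$
Next, since $(F_m)$ converges to $F$ uniformly on $K$ (and hence on $L \subset \overline{K}$, after the usual adjustment of passing to a slightly enlarged compact set or invoking continuity to extend uniform convergence to the boundary), there exists an integer $M = M(L) > 0$ such that
$$|F_m(z) - F(z)| < \epsilon \leq |F(z)| \quad \text{for all } z \in L \text{ and all } m > M.$$

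With this inequality in hand, I would apply Rouch\'{e}'s Theorem to the pair $F$ and $F_m = F + (F_m - F)$ on the contour $L$: because the perturbation $F_m - F$ is strictly dominated in modulus by $F$ along $L$, the functions $F$ and $F_m$ have the same number of zeros, counted with multiplicity, inside $L$. This is exactly the conclusion of the lemma.

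The main (very mild) obstacle is topological bookkeeping: one must ensure that uniform convergence on $K$ in fact delivers uniform convergence on the contour $L$ where Rouch\'{e} is applied. This is routine once one takes $K$ to include a neighbourhood of $L$, or notes that in the intended application every $F_m$ is analytic on an open set containing $\overline{K}$, so uniform convergence on the compact set $\overline{K} = K \cup L$ is automatic from uniform convergence on $K$. Beyond that, the argument is an immediate appeal to Rouch\'{e}, so the lemma can be treated as a black box cited from \cite{markushevich1965theory}.
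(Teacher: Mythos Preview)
Your proof is correct and follows the standard route via Rouch\'{e}'s Theorem. The paper does not prove this lemma at all: it is stated as a classical result and cited from \cite[Theorem 2.5]{markushevich1965theory}, to be used as a black box in the proof of Corollary~\ref{goodN}. There is therefore nothing to compare; your argument is precisely the textbook proof one would expect behind that citation, and your remark about the topological bookkeeping (uniform convergence on $K$ versus on $L$) is appropriate caution for the slightly informal hypotheses as stated.
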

\begin{corollary}\label{goodN}
Let $\{g_1,\dots,g_n\}$ be a collection of linearly independent real analytic functions defined on a compact interval $I$. There exists an $N=N(g_1,\dots,g_n,I)$ such that any non-trivial linear combination $\sum_{i=1}^n a_if'_i$ has at most $N$ zeros.
\end{corollary}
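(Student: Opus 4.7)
The plan is to normalise the coefficient vectors $(a_1,\dots,a_n)$ to the unit sphere $S^{n-1}$, which is compact, and then argue by contradiction using Hurwitz's Theorem. Suppose no such bound $N$ exists; then for every $m \in \mathbb{N}$ there is a unit vector $\vec{a}^{(m)} = (a_1^{(m)},\dots,a_n^{(m)})$ such that $F_m := \sum_{i=1}^n a_i^{(m)} g_i$ has at least $m$ zeros in $I$. By compactness of $S^{n-1}$, pass to a subsequence with $\vec{a}^{(m)} \to \vec{a} \in S^{n-1}$, and set $F := \sum_{i=1}^n a_i g_i$. Since each $g_i$ is bounded on $I$ (being continuous on a compact set) and in fact on a complex neighbourhood of $I$ where it is analytic, $F_m \to F$ uniformly there. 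Linear independence of $\{g_1,\dots,g_n\}$ forces $F \not\equiv 0$, so $F$ has only isolated zeros.

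By our standing convention, each $g_i$ extends analytically to an open interval containing $I$, hence to an open neighbourhood $U \subset \mathbb{C}$ of $I$. Thus $F$ is analytic and not identically zero on $U$, so it has only finitely many zeros in any compact subset of $U$; call this number $K$. Pick a simple closed contour $L \subset U$ whose interior $\mathcal{K}$ contains $I$, for instance a thin rectangle around $I$. Since the (finitely many) zeros of $F$ in the closure of $\mathcal{K}$ form a discrete set, we may perturb the vertical sides of the rectangle a small amount so that $L$ avoids all zeros of $F$. Applying Hurwitz's Theorem to the sequence $F_m \to F$ on this contour, for all sufficiently large $m$ the function $F_m$ has exactly the same number of zeros in $\mathcal{K}$ as $F$, namely at most $K$. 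But $F_m$ was assumed to have at least $m$ zeros in $I \subset \mathcal{K}$, contradicting $m > K$.

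The main obstacle I foresee is the clean choice of the contour $L$: we need $L$ to enclose all of $I$ while simultaneously avoiding the zeros of $F$ lying on or near $I$. The key observation that makes this work is that analyticity on a complex neighbourhood $U \supset I$ (which is part of the paper's blanket convention on analytic functions) ensures zeros of $F$ are isolated in $U$, so only finitely many lie in a small complex tube around $I$, and these can be sidestepped by an arbitrarily small deformation of $L$. Once that geometric point is settled, the Hurwitz machinery plus the compactness of $S^{n-1}$ delivers the uniform bound $N$ depending only on $g_1,\dots,g_n$ and $I$, as claimed. Note that the statement as written applies the corollary to a family $\{f_i'\}$, which is simply the case $g_i = f_i'$ of the assertion about $g_i$.
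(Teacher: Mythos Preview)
Your proof is correct and follows essentially the same argument as the paper's: normalise the coefficient vector to a compact set (you use $S^{n-1}$, the paper uses $[-1,1]^n$ after dividing by the largest coefficient), pass to a convergent subsequence by compactness, and apply Hurwitz's Theorem on a contour enclosing $I$ chosen to avoid the isolated zeros of the limit $F$, obtaining a contradiction with the assumed unbounded zero count. Your discussion of how to select the contour is somewhat more explicit than the paper's, but the two proofs are otherwise interchangeable.
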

\begin{proof}
First, let us extend our functions $g_i$ to a connected and simply connected open set $\Omega \subset \mathbb{C}$ such that $I \subset \Omega$. Now suppose the claim is false. Then for any $m>0$ there exists a set of non-zero coefficients $\{a_i^{(m)}\}_{i=1}^n$ such that the function $F_m\coloneqq \sum_{i=1}^n a_i^{(m)}g_i$ has at least $m$ zeros. Dividing by the largest coefficient, we may assume that the $a_i^{(m)}$ all lie in the interval $[-1,1]$. Write these sets of coefficients as a sequence of vectors $\mathbf{a}_m\coloneqq (a_1^{(m)},\dots,a_n^{(m)}) \in [-1,1]^n$. Compactness of $[-1,1]$ guarantees that the sequence of vectors $(\mathbf{a}_m)$ contains a convergent subsequence. Hence, the corresponding subsequence of functions $F_m$ converges uniformly to a complex analytic function $F$. Since the zeros of $F$ are isolated, there must exist a closed contour $L=L(F) \subset \Omega$ with compact interior $K$ containing $I$ such that $F$ does not vanish on $L$. By Hurwitz's Theorem, either $F\equiv 0$ on $K$ or there is some $M=M(L) > 0$ such that for all $m>M$, $F_m$ and $F$ have the same number of zeros in $K$. Both of these possibilities yield contradictions since $F$ is a non-trivial linear combination of linearly independent functions and we have assumed that the number of zeros of the $F_m$ are strictly increasing. 

Since $M$ depends on $L$, which depends ultimately only on the functions $g_i$ and the interval $I$, we have the stated dependence.
\end{proof}
We are now ready to prove the remaining lemmas.

\begin{lemma}\label{f'todeltaf}
    Let $\mathcal{F}=\{f_i\}_{i=1}^n$ be a set of 1-independent analytic functions defined over a compact interval $I$. Then there exists some $N=N(\mathcal
    F,I)$ such that set of discrete derivatives $\{\Delta_df_i\}_{i=1}^n$ is linearly independent for any $0< d < \diam (I)/N$. Further, if $\mathcal
    F$ is $k$-independent, then for $\{\Delta_df_i\}_{i=1}^n$ is $(k-1)$-independent for any $0<d<\diam(I)/N$.
\end{lemma}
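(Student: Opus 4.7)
The plan is a compactness-and-limits argument, modeled on the proof of Corollary \ref{goodN}, that exploits the identity $\Delta_d f_i(x)/d \to f_i'(x)$ as $d \to 0$. The contradiction hypothesis will be shown to produce, in the limit, a non-trivial vanishing linear combination of the derivatives $f_i'$, contradicting the assumed 1-independence of $\mathcal{F}$.

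In detail, I suppose for contradiction that no such $N$ exists. Then for each integer $m \geq 1$ there is some $d_m \in (0, \diam(I)/m)$ and a non-zero coefficient vector $\vec{a}^{(m)} = (a_1^{(m)}, \ldots, a_n^{(m)})$ such that
$$\sum_{i=1}^n a_i^{(m)} \Delta_{d_m} f_i \equiv 0 \quad \text{on } \{x \in I : x + d_m \in I\}.$$
Normalising so that $\max_i |a_i^{(m)}| = 1$, the vectors $\vec{a}^{(m)}$ lie in the compact cube $[-1,1]^n$, so I may extract a convergent subsequence $\vec{a}^{(m_j)} \to \vec{a}$ with $\max_i |a_i| = 1$, hence $\vec{a} \neq 0$. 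Dividing the vanishing identity through by $d_{m_j}$ and using that
$$\frac{\Delta_d f_i(x)}{d} = \frac{1}{d} \int_0^d f_i'(x+t)\,dt \longrightarrow f_i'(x) \quad \text{as } d \to 0,$$
uniformly on $I$ by continuity of $f_i'$, I pass to the limit along the subsequence to obtain $\sum_i a_i f_i'(x) = 0$ throughout $I$, contradicting 1-independence of $\mathcal{F}$.

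For the second assertion, if $\mathcal{F}$ is $k$-independent for some $k \geq 1$ then the family $\{f_i^{(k-1)}\}_{i=1}^n$ is itself 1-independent. Applying the first part to this family yields $N = N(\mathcal{F}, I)$ such that $\{\Delta_d f_i^{(k-1)}\}_{i=1}^n$ is linearly independent for $0 < d < \diam(I)/N$; since ordinary and discrete differentiation commute, $(\Delta_d f_i)^{(k-1)} = \Delta_d f_i^{(k-1)}$, which is exactly the $(k-1)$-independence of $\{\Delta_d f_i\}_{i=1}^n$. The whole proof is a direct compactness template, with analyticity used only through continuity of $f_i'$ to ensure uniform convergence of the difference quotients, so I do not anticipate any significant obstacle.
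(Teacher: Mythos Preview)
Your argument for the first assertion is correct and genuinely different from the paper's. The paper invokes Corollary~\ref{goodN} (the Hurwitz-theorem consequence) to bound uniformly the number $M$ of zeros on $I$ of any non-trivial linear combination $\sum_i a_i f_i'$; taking $N=M+1$ guarantees a subinterval of length $\diam(I)/N$ on which such a combination has fixed sign, so that $\int_x^{x+d}\sum_i a_i f_i'(t)\,dt$ cannot vanish for $x$ at its left endpoint. Your compactness-and-limit route bypasses Corollary~\ref{goodN} altogether and in fact only needs $C^1$-regularity of the $f_i$ (plus the standing assumption that they extend past the endpoints of $I$, so that the uniform convergence on $I$ is justified); the paper's argument leans more heavily on analyticity. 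The trade-off is that your $N$ is purely existential, while the paper's is tied to a concrete zero count.

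For the second assertion your bootstrap via $\{f_i^{(k-1)}\}$ and the commutation $(\Delta_d f_i)^{(k-1)}=\Delta_d f_i^{(k-1)}$ is essentially what the paper does as well. One small omission: the paper's proof additionally verifies that each $\Delta_d f^{(s-1)}$ is non-vanishing for $1\le s\le k$, via $\Delta_d f^{(s-1)}(x)=\int_x^{x+d} f^{(s)}(t)\,dt$ and non-vanishing of $f^{(s)}$. This is not formally demanded by Definition~\ref{mutcon}, but the paper tacitly treats non-vanishing of the relevant derivatives as part of the working notion of independence elsewhere, so you may wish to include it.
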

\begin{proof}
By Corollary \ref{goodN}, there is an $M=M(\mathcal
F,I)$ such that any non-trivial linear combination of the linearly independent derivatives $f_i'$ has at most $M$ zeros on $I$. Set $N=M+1$ and take $0<d<\diam(I)/N$.
Now, consider a linear combination $\sum_{i=1}^n a_i\Delta_df$ and suppose that it vanishes for all $x \in I \cap (I-d)$.
We have, for all $x$,
        \begin{align}\label{tobecontradicted}
            0=\sum_{i=1}^n a_i\Delta_df_i(x)&=\sum_{i=1}^n a_i\int_x^{x+d}f_i'(t)dt=\int_x^{x+d}\sum_{i=1}^na_if_i'(t)dt.
        \end{align} 
If the integrand $\sum_{i=1}^na_if_i'$ on the right vanishes for all $t \in [x,x+d]$, then $\sum_{i=1}^na_if_i' \equiv 0$ by analyticity. It would then follow by linear independence of $f'_i$ that $a_i=0$ for all $1 \leq i \leq n$, proving linear independence of the $\Delta_df_i$. Suppose, then, that the integrand $\sum_{i=1}^na_if_i'$ does not vanish identically on $[x,x+d]$ for any $x$. We have established that $\sum_{i=1}^na_if_i'$ can have at most $M$ zeros on $I$, hence there exists an open subinterval of length $\diam(I)/(M+1)$ where $\sum_{i=1}^na_if_i'$ never vanishes. Taking $x$ to be the left endpoint and noting that $0<d<\diam(I)/(M+1)$, we have that its integral $\int_x^{x+d}\sum_{i=1}^na_if_i'(t)dt$ cannot vanish, contradicting \eqref{tobecontradicted}. Whence linear independence of the $\Delta_df_i'$ is the only possibility. 

    To see why the second statement is true, note that for any $f \in \mathcal{F}$, $(\Delta_df)^{(k-1)}=\Delta_d(f^{(k-1)})$. Whence linear independence of the $(\Delta_df_i)^{(k-1)}$ follows from linear independence of the $f_i^{(k)}$. Non-vanishing of the $\Delta_df^{(s-1)}$ for each $1 \leq s \leq k$ comes from the non-vanishing of $f^{(s)}$ as follows: We have, for all $x \in I \cap (I-d)$
    $$\Delta_d f^{(s-1)}(x)=\int_x^{x+d}f^{(s)}(t)dt.$$
    And the integral on the right cannot vanish as its integrand is continuous and non-vanishing on the range of integration.
\end{proof}
Combining the previous two lemmas, we have the following.
\begin{lemma}\label{f'todeltaf2}
    Let $n\geq 1$ and let $\mathcal{F}$ be any set of $n$ 1-independent analytic functions defined over an interval $I$. Then exists an $N=N(\mathcal{F},I)$ such that for any $0<d<\diam(I)/N$ and any ordering $\{f_i\}_{i=1}^n$ of $\mathcal{F}$ the set $\{\Delta_d(f_i\circ f_n^{-1})\}_{i=1}^{n-1}$ is a set of $n$ 1-independent analytic functions.
\end{lemma}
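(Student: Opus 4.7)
The plan is to obtain Lemma \ref{f'todeltaf2} as a clean composition of Lemmas \ref{fginv} and \ref{f'todeltaf}. Fix an ordering $\{f_i\}_{i=1}^n$ of $\mathcal{F}$ and set $g_i \coloneqq f_i \circ f_n^{-1}$ for $1 \leq i \leq n-1$. By Lemma \ref{fginv} (and its proof), each $g_i'$ is a non-vanishing analytic function on $I$ and the collection of second derivatives $\{g_i''\}_{i=1}^{n-1}$ is linearly independent; equivalently, $\{g_i'\}_{i=1}^{n-1}$ is a 1-independent family of analytic functions in the sense of Definition \ref{mutcon}.

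Next, I would apply Lemma \ref{f'todeltaf} to this 1-independent family $\{g_i'\}_{i=1}^{n-1}$. This produces a constant $N = N(\mathcal{F}, I)$ such that for every $0 < d < \diam(I)/N$ the set of discrete derivatives $\{\Delta_d g_i'\}_{i=1}^{n-1}$ is linearly independent on $I \cap (I - d)$. Since the discrete derivative commutes with $\tfrac{d}{dx}$, we have $\Delta_d g_i' = (\Delta_d g_i)'$, so the derivatives of $\{\Delta_d g_i\}_{i=1}^{n-1}$ are linearly independent, which is the content of 1-independence. Non-vanishing of each $\Delta_d g_i$ is immediate from the integral representation $\Delta_d g_i(x) = \int_x^{x+d} g_i'(t)\,dt$, combined with the fact that $g_i'$ is continuous and non-vanishing on the connected set $I$ and therefore of constant sign.

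To make the constant $N$ uniform across orderings, I would take the maximum of the $N(\mathcal{F}, I)$ produced above over the $n!$ possible choices of which element of $\mathcal{F}$ to designate $f_n$; the result still depends only on $\mathcal{F}$ and $I$. The main obstacle to watch is precisely this uniformity: one must ensure that $N$ does not depend on the particular ordering or on the coefficients of any specific non-trivial linear combination $\sum a_i g_i''$. This is handled transparently by Corollary \ref{goodN}, which bounds the number of zeros of an arbitrary non-trivial linear combination of a fixed linearly independent analytic family by a single constant depending only on the family and the interval. Once this uniform zero count is in place, the proof reduces to a mechanical composition of Lemmas \ref{fginv} and \ref{f'todeltaf}.
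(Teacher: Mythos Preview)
Your proposal is correct and follows essentially the same route as the paper: apply Lemma~\ref{fginv} to obtain that $\{(f_i\circ f_n^{-1})'\}_{i=1}^{n-1}$ is 1-independent, then apply Lemma~\ref{f'todeltaf} to pass to discrete derivatives, noting that $\Delta_d$ commutes with differentiation. Your additional remarks on non-vanishing and on taking a maximum to make $N$ uniform across orderings are correct refinements that the paper leaves implicit; note, however, that only $n$ choices (of which function plays the role of $f_n$) need be considered, not $n!$, and that the $g_i$ live on $f_n(I)$ rather than $I$.
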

\begin{proof}
By Lemma \ref{fginv}, we know that $\{(f_i\circ f_n^{-1})'\}_{i=1}^{n-1}$ is a set of 1-independent analytic functions, i.e. the set $\{(f_i\circ f_n^{-1})''\}_{i=1}^{n-1}$ is linearly independent. By Lemma \ref{f'todeltaf}, the set $\{\Delta_d(f_i\circ f_n^{-1})'\}_{i=1}^{n-1}$ is linearly independent, and hence the set $\{\Delta_d(f_i \circ f_n^{-1})\}_{i=1}^{n-1}$ is a set of 1-independent analytic functions.
\end{proof}

\subsection{Equidistribution}\label{equidistsec}

We introduce the following notation from \cite{peter}: If $a'<a$, then
$$n_A(a,a') \coloneqq |(2A-A)\cap(a',a]|.$$
Looking over each pair $a,a' \in A$, the quantity $n_A(a,a')$ describes how the elements of $2A-A$ are distributed amongst the gaps between elements of $A$. 

The following lemma, first proved in \cite{peterenergy}, says that, in some sense, the set $A+A-A$ is evenly distributed between these gaps. See \cite{peter, peterthesis} for more details. 

\begin{lemma}\label{equidistribution} let $D:=\{d_1<\dots<d_{|D|}\}$ be the set of positive differences in $A-A$. Suppose $a, a' \in A$ with $a'<a$ and $n_A(a',a) \leq N$. Then $a-a' \leq d_N$.
\end{lemma}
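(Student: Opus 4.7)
The plan is to argue by contrapositive: assuming $a - a' > d_N$, I will construct $N+1$ distinct elements of $(2A-A) \cap (a',a]$, contradicting the hypothesis $n_A(a',a) \leq N$.

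The construction is direct. For each $i \in \{1,\dots,N\}$, since $d_i \leq d_N < a-a'$, writing $d_i = a_j - a_k$ with $a_j > a_k$ in $A$, the element
$$a - d_i = a - a_j + a_k$$
lies in $2A - A$. Moreover $0 < d_i < a - a'$ immediately gives $a' < a - d_i < a$, so $a - d_i \in (a',a]$. Separately, $a = a + a - a \in 2A - A$ and $a \in (a',a]$, giving one additional element.

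It remains to verify that the $N+1$ elements $\{a - d_1, \ldots, a - d_N, a\}$ are pairwise distinct. The $a - d_i$ are distinct from each other because $d_1 < \dots < d_N$, and each differs from $a$ because $d_i > 0$. Hence
$$n_A(a', a) = |(2A-A) \cap (a', a]| \geq N + 1 > N,$$
which is the desired contradiction. The conclusion $a - a' \leq d_N$ follows.

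There is essentially no obstacle; the only subtle points are the boundary conditions, namely checking that $a - d_i$ is strictly greater than $a'$ (which requires the strict inequality $d_i < a-a'$ coming from $d_i \leq d_N < a - a'$), and that including the element $a$ itself provides the extra point needed to cross the threshold from $N$ to $N+1$. Both are handled by the construction above.
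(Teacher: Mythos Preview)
Your proof is correct and follows essentially the same approach as the paper's: both argue by contradiction and construct at least $N+1$ elements of $2A-A$ in $(a',a]$ using the differences $d_i$. The only cosmetic difference is that the paper adds the $d_i$ to the left endpoint (using $a'+d_i$ and the observation that $a-a'=d_M$ for some $M>N$), whereas you subtract the $d_i$ from the right endpoint and include $a$ explicitly as the extra element.
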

\begin{proof}
Suppose not, i.e. that there exists some $M > N$ for which $a-a'=d_M$. Then we have for each $i \leq M$,
$$a' < a'+d_i \leq a$$

accounting for at least $M>N$ elements of $A+A-A$ in $(a',a]$, contradicting the assumption that $n_A(a',a) \leq N$. 
\end{proof}

We now restate and prove a version of (\ref{basicinequality2}). The proof here is essentially identical to Bradshaw's \cite{peter}, but we include it for completeness as it will serve as a base case for our later inductions. 

\begin{lemma}\label{basecase} 
Let $I$ be a compact interval, and let $A \subset I$ be a finite subset. Let $f$ and $g$ be 1-independent analytic functions. Then 
$$n_{f(A)}(\min f(A),\max f(A)) \cdot n_{g(A)}(\min g(A),\max g(A)) \gg_{f,g,I} |A|^3.$$
\end{lemma}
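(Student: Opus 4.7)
The plan is to reduce to the classical base-case inequality \eqref{basicinequality} for a single strictly convex function, using 1-independence and analyticity to extract a large subinterval on which $f \circ g^{-1}$ is strictly convex.

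First, since $f'$ and $g'$ are non-vanishing linearly independent analytic functions on $I$, the ratio $f'/g'$ is well-defined, analytic, and non-constant. Its derivative is then a non-identically-zero analytic function and has only $O_{f,g,I}(1)$ zeros on $I$, so $I$ partitions into $O_{f,g,I}(1)$ subintervals on each of which $f'/g'$ is strictly monotone. Pigeonhole furnishes a subinterval $J$ with $|A \cap J| \gg_{f,g,I} |A|$. On $g(J)$, the composition $h := f \circ g^{-1}$ has derivative $(f'/g') \circ g^{-1}$, which is strictly monotone, so $h$ is strictly convex or strictly concave there. Replacing $h$ by $-h$ and/or $h(-\,\cdot\,)$ if necessary---operations which preserve all cardinalities appearing in the statement---we may assume $h$ is strictly convex and strictly increasing. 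Setting $A' := A \cap J$ and $B := g(A')$, the inclusions $f(A') \subseteq f(A)$ and $g(A') \subseteq g(A)$ together with the corresponding inclusions of their range intervals imply $n_{f(A')} \leq n_{f(A)}$ and $n_{g(A')} \leq n_{g(A)}$, so it suffices to prove the lemma with $A$ replaced by $A'$.

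The task thus reduces to showing
$$n_B(\min B,\max B)\cdot n_{h(B)}(\min h(B),\max h(B))\gg |B|^3$$
for $h$ strictly convex on an interval containing $B$. This is a variant of inequality \eqref{basicinequality} and follows by the squeezing-plus-equidistribution argument of Bradshaw: the squeezing lemma (Lemma \ref{squeezing lemma}) places, for each consecutive gap of $h(B)$ with underlying difference $d = b_{i+1}-b_i$, many distinct elements of $2h(B)-h(B)$ of the form $h(b_j)+h(b_i+d)-h(b_i)$ strictly inside that gap, where distinctness of these elements across choices of $j$ is guaranteed by strict convexity of $h$. Summing the contributions over all such gaps and applying Lemma \ref{equidistribution}---which converts a bound $n_B \leq N$ into the differences-bound $b-b'\leq d_N$---yields the claimed $|B|^3$ product estimate via double counting.

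The main obstacle is the first step: verifying that 1-independence together with analyticity does produce a constant-factor reduction to the strictly-convex case. Analyticity is essential here, since without it the partition of $I$ into monotonicity intervals of $f'/g'$ could be infinite, invalidating the pigeonhole step. The final combinatorial step is standard, but care must be taken to ensure that the elements squeezed into each gap lie strictly inside the range of $h(B)$---so that they contribute to $n_{h(B)}$ rather than being lost at the boundary---which is ensured by the precise interval statement in the squeezing lemma.
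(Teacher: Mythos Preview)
Your reduction to a strictly convex $h=f\circ g^{-1}$ on a large subinterval is exactly the paper's first step, and the reflection trick as well as the inclusions $n_{f(A')}\le n_{f(A)}$, $n_{g(A')}\le n_{g(A)}$ are handled correctly.

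The gap is in your sketch of the combinatorial step. The elements you write, $h(b_j)+h(b_i+d)-h(b_i)=h(b_j)+h(b_{i+1})-h(b_i)$, are just a fixed translate of $h(B)$ and do not lie in the gap $(h(b_i),h(b_{i+1})]$ for general $j$; if instead you meant $h(b_i)+h(b_j+d)-h(b_j)$ (which is what Lemma~\ref{squeezing lemma} actually produces), then $b_j+d$ need not lie in $B$, so these are not elements of $2h(B)-h(B)$ either. In fact the paper's proof of this lemma (following Bradshaw) does not invoke the squeezing lemma at all. The key device is the mean value theorem: for each consecutive pair $(b_i,b_{i+1})$ there is $c_i\in(b_i,b_{i+1})$ with $h'(c_i)=(h(b_{i+1})-h(b_i))/(b_{i+1}-b_i)$, and strict monotonicity of $h'$ together with disjointness of the intervals $(b_i,b_{i+1})$ makes the map $\Psi:b_i\mapsto(b_{i+1}-b_i,\,h(b_{i+1})-h(b_i))$ injective. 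One then restricts to the $\gg|B|$ ``good'' indices $i$ for which both $n_B(b_i,b_{i+1})\ll n_B(\min B,\max B)/|B|$ and $n_{h(B)}(h(b_i),h(b_{i+1}))\ll n_{h(B)}(\min h(B),\max h(B))/|B|$; Lemma~\ref{equidistribution} then forces each coordinate of $\Psi$ on good indices to take at most $O(n_B/|B|)$ and $O(n_{h(B)}/|B|)$ values respectively, so injectivity gives $|B|\ll (n_B/|B|)(n_{h(B)}/|B|)$, i.e.\ $n_B\cdot n_{h(B)}\gg |B|^3$. Your citation of equidistribution is apt, but it is this injectivity-via-MVT step---not squeezing---that carries the argument.
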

\begin{proof} Noting the discussion preceding Definition \ref{mutcon}, at the cost of introducing implicit constants dependent on the functions $f$ and $g$, we may work on a subinterval where $f\circ g^{-1}$ is strictly convex or strictly concave. For simplicity we assume that $f \circ g^{-1}$ is $1$-convex (i.e. strictly convex and strictly increasing), noting that the other cases follow mutatis mutandis.   

Let $F(x):=f\circ g^{-1}(x)$ and set $B=g(A)$. Writing $B=\{b_1<...<b_{|A|}\}$, we say that an element $b_i \in B$ is \textit{good} if
\begin{equation}\label{good}
    n_{B}(b_i,b_{i+1})  \leq \frac{4n_{B}(b_1,b_{|B|})}{|B|}  \ \text{  and  } \ n_{F(B)}(F(b_i),F(b_{i+1}))  \leq \frac{4n_{F(B)}(F(b_1),F(b_{|B|}))}{|B|}. 
\end{equation}
By the pigeonhole principle, one can find a subset $B' \subseteq B$ with $|B'| \geq |B|/2$ consisting only of good $b_i$. 
By the mean value theorem, we have, for all consecutive pairs $(b_i,b_{i+1})$ in $A$, that there exists a $c_i \in (b_i,b_{i+1})$ such that
$$\frac{F(b_{i+1})-F(b_i)}{b_{i+1}-b_i}=F'(c_i).$$
From the convexity of $F$, we know that $F'$ is strictly monotone. Thus knowing $F(b_{i+1})-F(b_i)$ and $b_{i+1}-b_{i}$ uniquely determines $c_i$. In turn, by disjointness of the intervals $(b_i,b_{i+1})$, knowing $c_i$ uniquely determines the $b_i$. It follows that the map $\Psi:b_i \mapsto(b_{i+1}-b_i,F(b_{i+1})-F(b_i))$ is injective. Whence
\begin{equation}\label{Psi}
    |B| \ll |B'|-1 = |\Psi(B'\setminus \{b_{|B|}\})|.
\end{equation}
Since $B'$ consists only of good elements, we have that (\ref{good}) holds for each $b_i \in B'$. Lemma \ref{equidistribution} then shows that for any $b_i \in B'$, $b_{i+1}-b_i$ and $F(b_{i+1})-F(b_i)$ are respectively amongst the smallest $4n_{B}(b_1,b_{|B|})/|B|$ and $4n_{F(B)}(F(b_1),F(b_{|B|}))/|B|$ elements of $B-B$ and $F(B)-F(B)$. Hence
$$|\Psi(\{(b_i,b_{i+1}): b_i \in B'\})| \ll \frac{n_{B}(b_1,b_{|B|})n_{F(B)}(F(b_1),F(b_{|B|}))}{|B|^2},$$
which, in combination with (\ref{Psi}) and the monotonicity of $F$, proves that
\begin{align*}
|B|^3 \ll n_{B}(\min B,\max B) &\cdot n_{F(B)}(\min F(B),\max F(B)).
\end{align*}
The result follows upon replacing $B$ with $g(A)$.
\end{proof}

From this, by taking $g$ as the identity function and $f$ as any convex function, we recover (\ref{basicinequality}) proved in \cite{peter}. 



\section{Proofs of Theorems \ref{betterKdependence}, \ref{growthforsomef} and \ref{f(A-A)}}\label{proofs}

  To prove Theorems \ref{betterKdependence} and \ref{growthforsomef}, we will, in fact, prove slightly stronger statements where the same bounds hold for the appropriate set(s) $B$ and value(s) $k$, each iterated sum set $2^kB-(2^k-1)B$ lies within the interval spanned by $B$. To this end, we introduce the following notation: For any set $B \subseteq \R$ and $s \in \mathbb{N},$
$$\mathcal{N}_k(B) \coloneqq |(2^kB-(2^k-1)B)\cap (\min B, \max B)|.$$
The statements given previously follow from the simple observation that, for any set $B$
$$|2^kB-(2^k-1)B| \geq \mathcal{N}_k(B).$$
In this notation, Lemma \ref{basecase} says that 1-independent analytic functions $f,g$ on a compact interval $I$ satisfy, for all finite subsets $A \subset I$, the following inequality.
\begin{equation*}
   \mathcal{N}_1(f(A))\cdot\mathcal{N}_1(g(A)) \gg_{f,g,I} |A|^3.
\end{equation*}
The following technical result concerning the quantity $\mathcal{N}_k(B)$, a form of the squeezing lemma, will be important for our proofs.
\begin{lemma}\label{B_d} Let $J$ be an interval and $k \geq 2$. Suppose that $g \in C^1(J)$ is a $1$-convex function and let $B=\{b_1 < \dots < b_{|B|}\}$ be a finite subset of $J$. Let $D$ be the set of consecutive differences $b_{i+1}-b_i$ of $B$. For each $d \in D$, we define the set $B_d \coloneqq \{b_i : b_{i+1}-b_i=d\}$ and let $B_d(i)=\{b_{e_1},\dots,b_{e_i}\}$ be the set of the smallest $i$ elements of $B_d$ for each $i=1, \dots, |B_d|.$ For each $d \in D$ and $1 \leq i \leq |B_d|$, define the function $\Delta_{d,b_{e_i}}g$ by 
 $$\Delta_{d,b_{e_i}}g(x) \coloneqq g(b_{e_i})+\Delta_dg(x).$$
Then,
    \begin{equation}\label{squeezing2}
    \mathcal{N}_k(g(B))\gg \sum_{d \in D} \sum_{i=1}^{|B_d|} \mathcal{N}_{k-1}(\Delta_{d,b_{e_i}}g(B_d(i))).
\end{equation}
\end{lemma}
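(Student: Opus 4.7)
The plan is to run a translated squeezing argument inside each consecutive gap of $g(B)$ and then sum the resulting disjoint contributions. The central observation is that every pair $(d,i)$ with $d\in D$ and $1\leq i\leq |B_d|$ singles out a unique consecutive pair $(b_{e_i},b_{e_i+1})$ in $B$, because $b_{e_i}\in B_d$ forces $b_{e_i+1}-b_{e_i}=d$. Monotonicity of $g$ therefore converts these into pairwise disjoint open intervals $(g(b_{e_i}),g(b_{e_i+1}))$ sitting inside $(\min g(B),\max g(B))$.

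Fix $d\in D$ and $1\leq i\leq |B_d|$, and set $T_{d,i}=\Delta_{d,b_{e_i}}g(B_d(i))=g(b_{e_i})+\Delta_d g(B_d(i))$. Since $T_{d,i}$ is a translate of $\Delta_d g(B_d(i))$, its value of $\mathcal{N}_{k-1}$ is unaffected by the shift. By $1$-convexity of $g$ the discrete derivative $\Delta_d g$ is strictly increasing, so for every $b_{e_j}\in B_d(i)$ (each of which satisfies $b_{e_j}\leq b_{e_i}$ by construction) we have $0<\Delta_d g(b_{e_j})\leq \Delta_d g(b_{e_i})=g(b_{e_i+1})-g(b_{e_i})$. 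This is precisely the form of the Squeezing Lemma needed here and gives $T_{d,i}\subseteq (g(b_{e_i}),g(b_{e_i+1})]$, and in particular $(\min T_{d,i},\max T_{d,i})\subset (g(b_{e_i}),g(b_{e_i+1}))$.

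Next, every element counted by $\mathcal{N}_{k-1}(T_{d,i})$ already lies in $2^k g(B)-(2^k-1)g(B)$. Indeed, each element of $T_{d,i}$ equals $g(b_{e_i})+g(x+d)-g(x)$ with $x,x+d\in B$, so it contributes one $+g(\cdot)$ and one $-g(\cdot)$ from $g(B)$ in addition to the translate $g(b_{e_i})$. Unfolding the $\pm$ pattern of $2^{k-1}T_{d,i}-(2^{k-1}-1)T_{d,i}$ and tallying coefficients produces exactly $2^k$ plus-signs and $2^k-1$ minus-signs on elements of $g(B)$, as required. Combined with the previous paragraph, every element counted by $\mathcal{N}_{k-1}(T_{d,i})$ is an element of $(2^k g(B)-(2^k-1)g(B))\cap (g(b_{e_i}),g(b_{e_i+1}))$.

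Disjointness of the intervals $(g(b_{e_i}),g(b_{e_i+1}))$ over all pairs $(d,i)$ then yields $\mathcal{N}_k(g(B))\geq \sum_{d\in D}\sum_{i=1}^{|B_d|}\mathcal{N}_{k-1}(T_{d,i})$, which is the desired bound with implicit constant $1$. The main obstacle is the coefficient bookkeeping in the third step, where one must carefully track the $\pm$ pattern through the iterated sum set to confirm it produces exactly $2^k$ plus-signs and $2^k-1$ minus-signs on elements of $g(B)$; once that is done, the remaining steps are faithful adaptations of the squeezing arguments used to prove Theorems~\ref{basicsqueeze} and \ref{longsumconvexset}.
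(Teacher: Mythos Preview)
Your proposal is correct and follows essentially the same approach as the paper's own proof: squeeze $T_{d,i}$ into the consecutive gap $(g(b_{e_i}),g(b_{e_i+1})]$, observe the containment $2^{k-1}T_{d,i}-(2^{k-1}-1)T_{d,i}\subset 2^kg(B)-(2^k-1)g(B)$, and sum over the disjoint gaps. Your version is slightly more explicit than the paper's in verifying the coefficient count (the paper simply asserts the containment), but the underlying argument is identical.
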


In \cite{peter}, this inequality is combined with a simple inductive argument using (\ref{basicinequality}) as a base case to prove Theorem \ref{petersqueeze}. Our proofs follow a similar structure with Lemma \ref{basecase} as our base case and an augmented inductive step.

\begin{proof}

We have, by definition, that $B_d(i)$ is a subset of $I \cap(I-d)$, the domain of $\Delta_dg$. Moreover, by Lemma \ref{squeezing lemma} we have that $\Delta_{d,b_{e_i}}g(B_d(i)) \subset (g(b_{e_i}),g(b_{e_{i+1}})]$ and note that for each $d,i$
$$2^{k-1}\Delta_{d,b_{e_i}}g(B_d(i))-(2^{k-1}-1)\Delta_{d,b_{e_i}}g(B_d(i)) \subset 2^kg(B)-(2^k-1)g(B).$$

Therefore, over each $d \in D$ and then $1 \leq i \leq |B_d|$, the sets $$(2^{k-1}\Delta_{d,b_{e_i}}g(B_d(i))-(2^{k-1}-1)\Delta_{d,b_{e_i}}g(B_d(i)))\cap (g(b_{e_i}),g(b_{e_{i+1}})]$$ are disjoint subsets of  $(2^kg(B)-(2^k-1)g(B))\cap (g(b_1),g(b_{|B|})]$ and contain at least $\mathcal{N}_{k-1}(\Delta_{d,b_{e_i}}g(B_d(i)))$ elements each, yielding the inequality.


\end{proof}

\begin{proof}[Proof of Theorem \ref{betterKdependence}]
We will show that for any compact interval $I \subset \R$, any finite set $A \subset I$ and any pair of $k$-independent analytic functions $f,g$, that

$$\mathcal{N}_k(f(A))\cdot\mathcal{N}_k(g(A)) \gg_{f,g,k,I} \frac{|A|^{2k+1}}{K^{5\cdot2^{k-1}-2k-3}}.$$

    The proof is by induction on $k$, with base case Lemma \ref{basecase}. The inductive step goes as follows: Let $k\geq 2$ and suppose that the statement holds for all pairs of $(k-1)$-independent functions. Divide $I$ into $N=N(f,g,I)$ subintervals, with this value of $N$ coming from Lemma \ref{f'todeltaf}. By the pigeonhole principle, one of these subintervals contains $\Omega_{f,g,I}(|A|)$ elements of $A$. Call this subinterval $I'$. Since $k \geq 2$, the definition of $k$-independence tells us that neither $f$ nor $g$ can be linear. Thus, by analyticity of $f,g$ and a further application of the pigeonhole principle, there is a subinterval $I''\coloneqq[a,b] \subseteq I'$, containing $\Omega_{f,g,I}(|A|)$ points of $A$, upon which $f'$ and $g'$ are both strictly monotone and non-vanishing. Write $A'=A \cap I''$. We now replace $f$ and $g$ with $1$-convex functions $\hat{f}, \hat{g}$ that have the same additive properties. We define the function $\hat{f}$ as follows
\begin{align*}
    \hat{f}(x)&=\begin{cases}
        f(x) & \text{if } f \text{ and } f' \text{ are strictly increasing,}\\
        -f(x)& \text{if } f \text{ and } f' \text{ are strictly decreasing,}\\
        f(a+b-x)& \text{if } f \text{ is strictly decreasing and } f' \text{ is strictly increasing,}\\
        -f(a+b-x)&\text{if } f \text{ is strictly increasing and } f' \text{ is strictly decreasing.}\\
    \end{cases}
\end{align*}
It is easy to check that $\hat{f},\hat{f}'$ are strictly increasing, defined on $I$ and that $\mathcal{N}_s(\hat{f}(A'))=\mathcal{N}_s(f(A'))$ for all $s \in \mathbb{N}$. We define the function $\hat{g}$ similarly.

    We will now work in a large subset $A''$ of $A'$ in which each element $a_i$ satisfies 
$$n_{A'}(a_i,a_{i+1})  \ll \frac{n_{A'}(a_1,a_{|A'|})}{|A'|}.$$

By the pigeonhole principle, we have $|A''| = \Omega_{f,g,I}(|A|)$. Let $D$ be the set of consecutive differences for this set, noting that $d < \diam(I)/N$ for each $d \in D$. By Lemma \ref{equidistribution}, we have that $|D| = \Omega(|A+A-A||A|^{-1})$. We define, for each $d \in D$, the set $A''_d$ and its truncation $A''_d(i)=\{b_{e_1},\dots,b_{e_i}\}$ for each $i =1, \dots, |A_d|$ as in Lemma \ref{B_d}. One may replace each term in the following product using (\ref{squeezing2}) to find
    \begin{align*}
    \mathcal{N}_k(\hat f(A''))\cdot \mathcal{N}_k(\hat g(A'')) &\gg \left(\sum_{d \in D}\sum_{i=1}^{|A''_d|} \mathcal{N}_{k-1}(\Delta_{d,b_{e_i}}\hat f(A''_d(i)))\right)\cdot \left(\sum_{d \in D}\sum_{i=1}^{|A''_d|} \mathcal{N}_{k-1}(\Delta_{d,b_{e_i}}\hat g(A''_d(i)))\right)\\
    \end{align*}
    whereupon an application of the Cauchy-Schwarz inequality yields
    \begin{align*}
        \mathcal{N}_k(\hat f(A''))\cdot \mathcal{N}_k(\hat g(A'')) &\gg \left(\sum_{d \in D}\sum_{i=1}^{|A''_d|}\mathcal{N}^{1/2}_{k-1}(\Delta_{d,b_{e_i}}\hat f(A''_d(i)))\cdot \mathcal{N}^{1/2}_{k-1}(\Delta_{d,b_{e_i}}\hat g(A''_d(i)))\right)^2.\\ 
    \end{align*}
     By Lemma \ref{f'todeltaf}, $\Delta_{d,b_{e_i}}\hat f=\hat f(b_{e_i})+\Delta_d\hat f$ and $\Delta_{d,b_{e_i}}\hat g=\hat g(b_{e_i})+\Delta_d\hat g$ are $(k-1)$-independent and thus by the inductive hypothesis we have 
    \begin{align*}
    \mathcal{N}_k(\hat f(A''))\cdot \mathcal{N}_k(\hat g(A'')) &\gg_{f,g,k,I} \left(\sum_{d \in D}\sum_{i=1}^{|A''_d|} \left(\frac{|A''_d(i)|^{5\cdot 2^{k-2}-2}}{|A''_i(d)+A''_i(d)-A''_i(d)|^{5\cdot 2^{k-2}-2(k-1)-3}}\right)^{1/2} \right)^2.\\
    \end{align*}
    Noting that $|A''_d(i)|=i$ and $|A+A-A|\gg |A''_d(i)+A''_d(i)-A''_d(i)|$ and applying H\"{o}lder's inequality twice, we have. 
    \begin{align*}
   \mathcal{N}_k(\hat f(A''))\cdot \mathcal{N}_k(\hat g(A'')) &\gg_{f,g,k,I} \frac{1}{|A+A-A|^{5\cdot 2^{k-2}-2(k-1)-3}}\left(\sum_{d\in D}\sum_{i=1}^{|A''_d|} i^{5\cdot 2^{k-3}-1} \right)^2 \\ 
    &\gg \frac{1}{|A+A-A|^{5\cdot 2^{k-2}-2(k-1)-3}}\left(\sum_{d\in D}|A''_d|^{5\cdot 2^{k-3}} \right)^2\\
    &\gg_{f,g,k,I} \frac{1}{|A+A-A|^{5\cdot 2^{k-2}-2(k-1)-3}}\left(\frac{|A|^{5\cdot 2^{k-3}}}{|D|^{5\cdot 2^{k-3}-1}}\right)^2.
\end{align*}
Finally, noting that $|A+A-A|/|A| \ll |D|$ we have
\begin{align*}    
   \mathcal{N}_k(\hat f(A''))\cdot \mathcal{N}_k(\hat g(A'')) &\gg_{f,g,k,I} \frac{|A|^{5\cdot 2^{k-1}-2}}{|A+A-A|^{5\cdot 2^{k-1}-2k-3}} = \frac{|A|^{2k+1}}{K^{5\cdot 2^{k-1}-2k-3}}.
    \end{align*}
    The result follows upon noting that $\mathcal{N}_k(f(A)) \geq \mathcal{N}(\hat{f}(A''))$ and $\mathcal{N}_k(g(A)) \geq \mathcal{N}_k(\hat{g}(A''))$.
\end{proof}





\begin{proof}[Proof of Theorem \ref{growthforsomef}]

Book-keeping will be aided by indexing from $0$, whence let ${\mathcal{F}=\{f_i\}_{i=0}^k}$ be a set of $k+1$ 1-independent analytic functions. The stated theorem follows with $n=k+1$.  
We aim to prove the bound 
\begin{equation}\label{maxbound}
 \max_{1 \leq j \leq k}\{\mathcal{N}_k(f_0(A)), \mathcal{N}_{k-j+1}(f_j(A))\} \gg_{\mathcal{F},I} |A|^{\phi(k+1)}.   
\end{equation}
Recall that this implies
$$\max_{1 \leq j \leq k}\{|2^kf_0(A)-(2^k-1)f_0(A)|,|2^{k-j+1}f_j(A)-(2^{k-j+1}-1)f_j(A)|\} \gg_{\mathcal{F},I} |A|^{\phi(k)},$$
from which the stated result follows upon the observation that for any non-negative integers $t<s$, $|2^tf_j(A)-(2^t-1)f_j(A)|\leq |2^sf_j(A)-(2^s-1)f_j(A)|.$

The bound \eqref{maxbound} is a consequence of bounding the following product as so
\begin{equation}\label{prodbound}
\mathcal{N}_{k}(f_{0}(A)) \cdot \prod_{j=1}^{k}\mathcal{N}_{k-j+1}^{p(j)}(f_{j}(A)) \gg_{\mathcal{F},I} |A|^{p(k+1)}.
\end{equation}
 Here, $p:\mathbb{N} \cup \{0\} \to \mathbb{N}$ is the sequence given by the following recursive formula:
\begin{align*}
    &p(0)=p(1)=1,\\
   &p(j)=p(j-1)+\sum_{i=0}^{j-1} p(i) \qquad \text{ for all } j \geq 2.
\end{align*}
Indeed, this is the case as long as $\phi(k)=p(k)/\sum_{i=0}^{k-1} p(i)$ for all $k\geq 2$ which one verifies with the calculation
\begin{align*}
    \frac{p(k)}{\sum_{i=0}^{k-1} p(i)} &= 1+\frac{p(k-1)}{\sum_{i=0}^{k-1} p(i)}\\
    &=1+\frac{1}{\frac{\sum_{i=0}^{k-1} p(i)}{p(k-1)}}\\
    &=1+\frac{1}{1+\frac{\sum_{i=0}^{k-2}p(i)}{p(k-1)}}=1+\frac{1}{1+\frac{1}{\phi(k-1)}}
\end{align*}

and observation that $p(2)/(p(0)+p(1))=3/2=\phi(2)$.

The proof of (\ref{prodbound}) is by induction on $k\geq 1$ (note that the trivial $k=0$ case holds but is unsuitable for our induction). The base case is Lemma \ref{basecase}. We prove the inductive step as follows. Let $A$ be any finite subset of the compact interval $I$ and consider any set of $k+1$ 1-independent analytic functions $\mathcal{F}=\{f_i\}_{i=0}^k$ defined on $I$. Divide $I$ into $N=N(\mathcal{F},I)$ subintervals, with this value of $N$ coming from Lemma \ref{f'todeltaf2}. By the pigeonhole principle, one of these subintervals contains $\Omega_{\mathcal{F},I}(|A|)$ elements of $A$. Call this subinterval $I'$. By analyticity and another application of the pigeonhole principle, $I'$ contains a subinterval $I''$, containing $\Omega_{\mathcal{F},I}(|A|)$ elements of $A$, upon which each derivative $f_i'$ is non-vanishing. Whence each function $f_i$ is strictly monotone and hence invertible on $I''$. Write $A' = A \cap I''$, $h_i\coloneqq f_i \circ f_k^{-1}$ for each $0 \leq i \leq {k-1}$ and write $B=f_k(A')$. Note that none of the functions $h_i$ are linear. Indeed, for all $0 \leq i \leq k-1$, $h_i'=(f_i'/f_k')\circ f_k^{-1}$ is non-constant by 1-independence of $\mathcal{F}$. By analyticity and another application of the pigeonhole principle there exists a subinterval $I''\coloneqq [a,b] \subseteq I'$ containing $\Omega_{\mathcal{F},I}(|B|)$ elements of $B$ upon which the $h_i'$ are all strictly monotone. Write $B'=B \cap I''$.

We now replace the $h_i$ with $1$-convex functions that have the same additive properties. This is done as follows: for all $1 \leq i \leq k-1$ we define
\begin{equation*}
    g_i(x)=\begin{cases}
        h_i(x) & \text{if } h_i \text{ and }h_i' \text{ are strictly increasing,}\\
        -h_i(x) & \text{if } h_i \text{ and } h_i' \text{ are strictly decreasing,}\\
        h_i(a+b-x) & \text{if } h_i \text{ is strictly decreasing and } h_i' \text{ is strictly increasing,}\\
        -h_i(a+b-x) & \text{if } h_i \text{ is strictly increasing and } h_i' \text{ is strictly decreasing.}
    \end{cases}
\end{equation*}
It is easy to check, for all $1 \leq i \leq k-1$, that $g_i,g_i'$ are defined on $I$, are strictly increasing and that $\mathcal{N}_s(g_i(B'))=\mathcal{N}_s(h_i(B'))$ for all $s \in \mathbb{N}$.

As in the proof of Theorem \ref{betterKdependence}, we will work in a large subset $B'' \subseteq B'$ in which each element $b_i$ satisfies 
$$n_{B'}(b_i,b_{i+1})  \ll \frac{n_{B'}(b_1,b_{|B'|}))}{|B'|}.$$
By the pigeonhole principle, we have that $|B''| \gg |B|$. Let $D$ be the set of consecutive differences for this set and note that $d<\diam(I)/N$ for each $d \in D$. By Lemma \ref{equidistribution}, we have that $|D| \ll \mathcal{N}_1(B)|B|^{-1}$. We again define, for each $d \in D$, set $B''_d$ and its truncation $B''_d(i)=\{c_{e_1},\dots,c_{e_i}\}$ for each $1 \leq i \leq  |B_d|$ as in Lemma \ref{B_d}.

We now consider the product 
\begin{align*}
    P(k)&\coloneqq \mathcal{N}_{k}(g_{0}(B)) \cdot \prod_{j=1}^{k-1}\mathcal{N}_{k-j+1}^{p(j)}(g_{j}(B))\\
    & \geq \mathcal{N}_{k}(g_{0}(B'')) \cdot \prod_{j=1}^{k-1}\mathcal{N}_{k-j+1}^{p(j)}(g_{j}(B'')).
\end{align*}

One may replace each term in the above product using (\ref{squeezing2}) to find
\begin{align*}
    P(k)  & \gg \left(\sum_{d \in D}\sum_{i=1}^{|B'_d|} \mathcal{N}_{k-1}(\Delta_{d,c_{e_i}}g_0(B''_d(i)))\right)\cdot \prod_{j=1}^{k-1}\left(\sum_{d \in D}\sum_{i=1}^{|B''_d|} \mathcal{N}_{k-j}(\Delta_{d,c_{e_i}}g_j(B''_d(i)))\right)^{p(j)} 
    \end{align*}
    whereupon an application of H\"{o}lder's inequality, with $q(k)\coloneqq \sum_{i=0}^{k-1}p(i)$, yields
    \begin{align*}
    P(k) \gg \left(\sum_{d \in D}\sum_{i=1}^{|B''_d|} \left(\mathcal{N}^{p(0)}_{k-1}(\Delta_{d,c_{e_i}}g_0(B''_d(i)))  \cdot  \prod_{j=1}^{k-1}\mathcal{N}^{p(j)}_{k-j}(\Delta_{d,c_{e_i}}g_j(B''_d(i)))\right)^{1/{q(k)}}\right)^{q(k)}. 
    \end{align*}
    By Lemma \ref{f'todeltaf2} the set $\{\Delta_{d,c_{e_i}}g_i\}_{i=0}^{k-1}=\{g_i(c_{e_i})+\Delta_d(f_i \circ f_k^{-1})\}_{i=0}^{k-1}$ is a set of $k-1$ 1-independent functions. Hence, we may apply the inductive hypothesis to find that
    \begin{align*}
    P(k) & \gg_{\mathcal{F},I} \left( \sum_{d\in D} \sum_{i=1}^{|B''_d|} |B''_d(i)|^{p(k)/q(k)} \right)^{q(k)} \\
    & \gg \left( \sum_{d\in D} \sum_{i=1}^{|B''_d|} i^{p(k)/q(k)} \right)^{q(k)} \gg\left( \sum_{d\in D} |B''_d|^{p(k)/q(k)+1} \right)^{q(k)} \gg_{\mathcal{F},I} |B|^{p(k)+q(k)}|D|^{-p(k)}.
\end{align*}
Since $|D|\ll \mathcal{N}_1(B)|B|^{-1}$ we have
$$P(k) \cdot \mathcal{N}_1(B)^{p(k)} \gg_{\mathcal{F},I} |B|^{2p(k)+q(k)}=|B|^{p(k+1)}.$$

Replacing $B$ with $f_{k}(A')$ and, for each $1 \leq i \leq k-1$, $g_i$ with $h_i=f_i\circ f_k^{-1}$ then yields

$$\mathcal{N}_{k+1}(f_{0}(A')) \cdot \prod_{j=1}^{k}(\mathcal{N}_{k-j+1}(f_{j}(A')))^{p(j)} \gg_{\mathcal{F},I} |A|^{p(k+1)}.$$

The desired result follows upon noting that 
$\mathcal{N}_k(f_j(A')) \leq \mathcal{N}_k(f_j(A))$ for all $0 \leq j \leq k$.

\end{proof}


\begin{proof}[Proof of Theorem \ref{f(A-A)}] By analyticity of $f$, there is a subinterval of $I$ containing $\Omega_f(|A|)$ where $f$ and $f'$ are both strictly monotone. We will now assume that $f$ and $f'$ are both strictly increasing, noting that the other cases follow mutatis mutandis. 

    Consider an $(n+1)$-tuple $(a_0,\dots,a_n) \in A^{n+1}$ with the minimum possible length $a_{n}-a_{0}$. There are $\Omega_n(|A|)$ left translates $[a_{0},a_{n}]-h$ of the interval $[a_{0},a_{n}]$ by elements of $h \in A$ that do not intersect. Let $H \subset A$ be the set of these translates. For each $1 \leq l \leq n$ write $\delta_l=a_{l}-a_{0}$.  
    
    For each $1 \leq l \leq n$, consider the discrete derivatives
\begin{equation}\label{leftandright}
    \Delta_{\delta_l}f(x)\coloneqq f(x+\delta_i)-f(x).
\end{equation}

As $h$ increases, the value of $\Delta_{\delta_l}f(a_0-h)$ decreases, hence for $h_i<h_j$ one can squeeze, for any $1 \leq l \leq n$ the $n$-tuple $(f(a_0-h_j),f(a_1-h_j), \dots, f(a_{n}-h_j))$ into the $n$-tuple $(f(a_{0}-h_i),f(a_1-h_i),\dots,f(a_{n}-h_i))$ so that the leftmost points coincide and the $l$th point of the squeezed $n$-tuple lies to the left of the $l$th point of the larger interval. That is, there exists an $s_{i,j}$ such that $f(a_0-h_j)-s_{i,j}=f(a_0-h_i)$ and
\begin{align*}
    f(a_{l}-h_j)-s_{i,j} &\in (f(a_{0}-h_i),f(a_l-h_i))
\end{align*}
for each $1 \leq l \leq n$. Now define, for each $l,i$, 
$$T_l(i)\coloneqq \{f(a_l-h_j)-s_{i,j}:i<j\}$$
and notice that $$T_l(i)= f(a_0-h_i) + \Delta_{\delta_l}f(a_0-H(i))$$ with $H(i)$ denoting the set of the smallest $i-1$ elements of $H=\{h_1<\dots<h_{|H|}\}$.
Now, if the functions (\ref{leftandright}) are 1-independent, then by Theorem \ref{growthforsomef} we have that, for each $i$, some $T_{l'}(i) \in \{T_l(i):1 \leq l \leq n\} $ must satisfy
    $$\mathcal{N}_{n-1}(T_{l'}(i))) \gg_{f,n,I} i^{\phi(n)}.$$
By the squeezing lemma (Lemma \ref{squeezing lemma}), we have that, for all $i$
\begin{align*}
   T_{l'}(i) \subset (f(a_0-h_i),f(a_{l'}-h_i)).
\end{align*}
 The intervals in 
 \begin{align*}
     \{(f(a_0-h_i),f(a_{l'}-h_i)):1 \leq i \leq |H|\}
 \end{align*}
  are, respectively, disjoint by the definition of $H$. Note further that, for any $s \in \mathbb{N}$,
$$2^sT_{l'}(i)-(2^s-1)T_{l'}(i) \subseteq 2^{s+1}f(A-A)-(2^{s+1}-1)f(A-A).$$
Thus we have
$$|2^{n}f(A-A)-(2^{n}-1)f(A-A)| \gg \sum_{i=1}^{|H|}\mathcal{N}_{n-1}(T_{l'}(i))) \gg_{f,n,I} \sum_{i=1}^{|A|} i^{\phi(n)} \gg |A|^{1+\phi(n)}$$
as desired.
\end{proof}
\section{Proofs of Corollaries \ref{polynomials} and \ref{introangles}}\label{corollaries}
We first present the proof of Corollary \ref{polynomials}.
\begin{proof}[Proof of Corollary \ref{polynomials}]
The second derivative $f''$ of any polynomial $f$ of degree at least $2$ can vanish at only finitely many points, and upon removing these, $\R$ is partitioned into $O_f(1)$ subintervals on which $f''$ does not vanish. We work on whichever of these intervals is guaranteed by the pigeonhole principle to contain $\Omega_f(|A|)$ elements of $A$. Call such an interval $I$. Note that the non-vanishing of $f''$ guarantees the non-vanishing of $\Delta_\delta f'$ for any $0<\delta <\diam(I)$. Indeed, we have
$$\Delta_\delta f'(x) = \int_x^{x+\delta} f''(t)dt$$
for any $x \in I \cap(I-\delta)$, and the integral on the right cannot vanish as its integrand is continuous and does not vanish over the range of integration.

    Now we need only check for any polynomial $f$ of degree $m \geq n+1$ that, for any set of $n$ distinct real numbers $\{\delta_1,\dots, \delta_n\}$, the functions $\Delta_{\delta_i}f'(x)=f'(x+\delta_i)-f'(x)$ are linearly independent. Writing $f(x)=\sum_{j=0}^{m}c_jx^j$ we have that    
    \begin{align*}
      \Delta_{\delta_i}f'(x)=  f'(x+\delta_i)-f'(x)&=\left(\sum_{j=1}^{m}jc_j(x+\delta_i)^{j-1}\right)-\left(\sum_{j=1}^{m}jc_jx^{j-1}\right)\\
      &=\sum_{j=1}^{m}jc_j((x+\delta_i)^{j-1}-x^{j-1})\\
      &=\sum_{j=1}^{m}jc_j\left(-x^{j-1}+\sum_{l=0}^{j-1}\binom{j-1}{m}\delta_i^{j-l-1}x^l\right)\\
      &=\sum_{j=2}^{m}\sum_{l=0}^{j-2}\binom{j-1}{l}jc_j\delta_i^{j-l-1}x^l.
\end{align*}
Changing the order of summation, we have the following expression for the polynomial $\Delta_{\delta_i}f'$.
      \begin{align*}
      \Delta_{\delta_i}f'(x)&=\sum_{l=0}^{m-2}C_{l}(\delta_i)x^l \qquad \text{with} \qquad   C_l(\delta_i)\coloneqq\sum_{j=l+2}^{m}\binom{j-1}{l}jc_j\delta_i^{j-l-1}.
        \end{align*}
We now consider a linear combination of the $\Delta_{\delta_i}f'$ with constant coefficients $a_i$.
\begin{align*}
    \sum_{i=1}^{m}a_i\Delta_{\delta_i}f'(x)&=\sum_{i=1}^{n}a_i\sum_{l=0}^{m-2}C_lx^l=\sum_{l=0}^{m-2}x^l\sum_{i=1}^na_iC_l(\delta_i).
\end{align*}
This linear combination vanishes everywhere if and only if, for each $0 \leq l \leq m-2$, we have that $\sum_{i=1}^na_iC_l(\delta_i)=0$. Suppose that this is the case. We then have the following system of equations
\begin{align*}
    \binom{m-1}{m-2}mc_{m}\sum_{i=1}^na_i\delta_i&=0,\\
    \binom{m-2}{m-3}(m-1)c_{m-1}\sum_{i=1}^na_i\delta_i+\binom{m-1}{m-3}mc_{m}\sum_{i=1}^na_i\delta_i^2&=0,\\
    \binom{m-3}{m-4}(m-2)c_{m-2}\sum_{i=1}^na_i\delta_i+\dots+\binom{m-1}{m-4}mc_{m}\sum_{i=1}^na_i\delta_i^3&=0,\\
    &\vdots\\
    \binom{1}{0}2c_2\sum_{i=1}^na_i\delta_i+\dots+\binom{m-1}{0}mc_{m}\sum_{i=1}^na_i\delta_i^{m-1} &=0.
\end{align*}
From considering these equations in order, one deduces that $\sum_{i=1}^na_i\delta_i^s=0$ for all $s\leq m$. That is, the vector $\mathbf{a}=(a_1,\dots,a_n)$ lies in the kernel of the following $(m-1)\times n$ matrix $$M\coloneqq\begin{pmatrix}
    \delta_1 & \delta_2 & \dots & \delta_n\\
    \delta_1^2 &\delta_2^2 &\dots & \delta_n^2\\
    \vdots & \vdots & \ddots &\vdots \\
    \delta_1^{m-1} & \delta_2^{m-1} & \dots & \delta_n^{m-1}
\end{pmatrix}=V_{m-1}(\delta_1,\dots,\delta_n)^T\cdot\diag(\delta_1,\dots,\delta_n).$$
Where $V_{m-1}(\delta_1,\dots,\delta_n)$ is a $n \times {(m-1)}$ Vandermonde matrix. Taking $M_n$ to be the $n\times n$ submatrix formed from $M$ by removing the bottom $m-n$ rows, we have
$$\det M_n =\det V_{n}(\delta_i,\dots,\delta_n) \cdot \det \diag(\delta_1,\dots,\delta_n) = \left(\prod_{1 \leq i < j \leq n}(\delta_i-\delta_j)\right)\cdot\left(\prod_{i=1}^n \delta_{i}\right).$$
Which we know must not vanish since each $\delta_i$ is distinct. Whence $\rank (M) \geq \rank (M_n) =n$, and the kernel of $M$ must be trivial, i.e. $\mathbf{a}=\mathbf{0}$. Thus, the functions $\Delta_{\delta_i}f'$ must be linearly independent.

    
\end{proof}

We now present the proof of Corollary \ref{introangles}, following roughly the proof of \cite[Theorem 1]{Olly}

\begin{proof}[Proof of Corollary \ref{introangles}] Since the set of angles is invariant under translation, rotation, reflection and scaling, we may, by a combination of the above transformations, position our set $A\times A$ so that $p=(0,0)$ and a subset $A'$ of at least $|A|/2$ elements of $A$ lies in the interval $[0,1]$. We now work with $A'\times A' \subset [0,1]\times[0,1]$. Set $A''=\log(A'\setminus\{0\})$, and $f(x)=\arctan(e^x)$ in Theorem \ref{f(A-A)}. Then $f(A''-A'')=\arctan\left(\frac{A'\setminus\{0\}}{A'\setminus \{0\}}\right)$. Notice that the angle at $(0,0)$ between points $(x,y),(0,0),(x',y') \in A' \times A'$ is given by 
$$\arctan\left(\frac{y}{x}\right)-\arctan\left(\frac{y'}{x'}\right).$$
Thus $f(A''-A'')-f(A''-A'') \subseteq \mathcal{A}(A \times A)$. 
By Theorem \ref{f(A-A)} with $n=3$ we have 
\begin{align*}
 |8\mathcal{A}(A\times A)|&=|8f(A''-A'')-8f(A''-A'')|\\ &\geq |8f(A''-A'')-7f(A''-A'')| \gg |A''|^{1+\phi(3)}= |A|^{13/5}.
\end{align*}
It just remains to check that for any distinct $\delta_1,\delta_2,\delta_3$ that the functions
$\Delta_{\delta_i}f$ are 1-independent i.e. that the functions $\Delta_{\delta_i}f'$ are linearly independent. 

Use of a computer algebra system (Mathematica \cite{Mathematica}, in our case) and the chain rule gives us the following expression for the Wro\'{n}skian $W(\Delta_{\delta_1}f',\Delta_{\delta_2}f',\Delta_{\delta_3}f')$:

\begin{align}\label{arctanWronskian}
     W(\Delta_{\delta_1}f',\Delta_{\delta_2}f',\Delta_{\delta_3}f')(x)=\frac{C_1y^4+C_2y^6+C_3y^8+C_4y^{10}}{(1+y^2)^{3}(1+\sigma_1^2y^2)^{3}(1+\sigma_2^2y^2)^{3}(1+\sigma_3^2y^2)^{3}}.
\end{align}
Where $y(x),\sigma_1,\sigma_2,\sigma_3,C_1,C_2,C_3,C_4$ are defined as follows:
\begin{align*}
    y(x) &= e^x,\\
    \sigma_i&=e^{\delta_i}, \quad i=1,2,3,\\
    C_1&=-16 (-1+{\sigma_1}) ({\sigma_1}-{\sigma_2}) (-1+{\sigma_2}) ({\sigma_1}-{\sigma_3}) ({\sigma_2}-{\sigma_3}) (-1+{\sigma_3})\\ &\qquad \cdot (-{\sigma_1}-{\sigma_1}^2-{\sigma_2}-2 {\sigma_1} {\sigma_2}-{\sigma_1}^2 {\sigma_2}-{\sigma_2}^2-{\sigma_1} {\sigma_2}^2-{\sigma_3}-\\ &\qquad 2 {\sigma_1} {\sigma_3}-{\sigma_1}^2 {\sigma_3}-2 {\sigma_2} {\sigma_3}-2 {\sigma_1} {\sigma_2} {\sigma_3}-{\sigma_2}^2 {\sigma_3}-{\sigma_3}^2-{\sigma_1} {\sigma_3}^2-{\sigma_2} {\sigma_3}^2),\\
    C_2 &=  -16 (-1+{\sigma_1}) ({\sigma_1}-{\sigma_2}) (-1+{\sigma_2}) ({\sigma_1}-{\sigma_3}) ({\sigma_2}-{\sigma_3}) \\&\qquad \cdot (3 {\sigma_1} {\sigma_2} {\sigma_3}+3 {\sigma_1}^2 {\sigma_2} {\sigma_3}+3 {\sigma_1} {\sigma_2}^2 {\sigma_3}+3 {\sigma_1} {\sigma_2} {\sigma_3}^2),\\
    C_3&= -16 (-1+{\sigma_1}) ({\sigma_1}-{\sigma_2}) (-1+{\sigma_2}) ({\sigma_1}-{\sigma_3}) ({\sigma_2}-{\sigma_3}) (-1+{\sigma_3}) \\&\qquad \cdot (-3 {\sigma_1}^2 {\sigma_2}^2 {\sigma_3}-3 {\sigma_1}^2 {\sigma_2} {\sigma_3}^2-3 {\sigma_1} {\sigma_2}^2 {\sigma_3}^2-3 {\sigma_1}^2 {\sigma_2}^2 {\sigma_3}^2),\\
    C_4 &=  -16 (-1+{\sigma_1}) ({\sigma_1}-{\sigma_2}) (-1+{\sigma_2}) ({\sigma_1}-{\sigma_3}) ({\sigma_2}-{\sigma_3}) (-1+{\sigma_3})\\ &\qquad \cdot ({\sigma_1}^3 {\sigma_2}^2 {\sigma_3}+{\sigma_1}^2 {\sigma_2}^3 {\sigma_3}+{\sigma_1}^3 {\sigma_2}^3 {\sigma_3}+{\sigma_1}^3 {\sigma_2} {\sigma_3}^2+2 {\sigma_1}^2 {\sigma_2}^2 {\sigma_3}^2+2 {\sigma_1}^3 {\sigma_2}^2 {\sigma_3}^2\\ &\qquad+{\sigma_1} {\sigma_2}^3 {\sigma_3}^2+2 {\sigma_1}^2 {\sigma_2}^3 {\sigma_3}^2 +{\sigma_1}^3 {\sigma_2}^3 {\sigma_3}^2+{\sigma_1}^2 {\sigma_2} {\sigma_3}^3+{\sigma_1}^3 {\sigma_2} {\sigma_3}^3+{\sigma_1} {\sigma_2}^2 {\sigma_3}^3\\ &\qquad+2 {\sigma_1}^2 {\sigma_2}^2 {\sigma_3}^3+{\sigma_1}^3 {\sigma_2}^2 {\sigma_3}^3+{\sigma_1} {\sigma_2}^3 {\sigma_3}^3+{\sigma_1}^2 {\sigma_2}^3 {\sigma_3}^3).
\end{align*}
By inspecting the above expressions and noting that the $\sigma_i$ are all distinct and greater than $1$, we see that $C_1,C_2,C_3$ and $C_4$ are all non-zero constants. Hence, the polynomial in the numerator on the right-hand side of \eqref{arctanWronskian} cannot vanish everywhere. This proves linear independence of the functions $\Delta_{\delta_i}f'$.
\end{proof}
\bibliography{bib}{}

\begin{thebibliography}{10}

\bibitem{beck1983lattice}
J{\'o}zsef Beck.
\newblock On the lattice property of the plane and some problems of dirac, motzkin and erd{\H{o}}s in combinatorial geometry.
\newblock {\em Combinatorica}, 3:281--297, 1983.

\bibitem{bloom}
Thomas~F. Bloom.
\newblock Control and its applications in additive combinatorics, 2025.

\bibitem{peterthesis}
Peter~J Bradshaw.
\newblock {\em Additive structure in convex sets and related topics}.
\newblock PhD thesis, University of Bristol, 2022.

\bibitem{peter}
Peter~J Bradshaw.
\newblock Growth in sumsets of higher convex functions.
\newblock {\em Combinatorica}, 43(4):769--789, 2023.

\bibitem{peterenergy}
Peter~J Bradshaw, Brandon Hanson, and Misha Rudnev.
\newblock Higher convexity and iterated second moment estimates.
\newblock {\em arXiv preprint arXiv:2104.11330}, 2021.

\bibitem{ENR2000}
G.~Elekes, M.~Nathanson, and I.~Ruzsa.
\newblock Convexity and sumsets.
\newblock {\em J. Number Theory}, 83(2):194--201, 2000.

\bibitem{elekes2012find}
Gy{\"o}rgy Elekes and Endre Szab{\'o}.
\newblock How to find groups?(and how to use them in erd{\H{o}}s geometry?).
\newblock {\em Combinatorica}, 32(5):537--571, 2012.

\bibitem{garaev2000lower}
Moubariz Zafar~ogly Garaev.
\newblock On lower bounds for the l 1-norm of exponential sums.
\newblock {\em Mathematical Notes}, 68(5):713--720, 2000.

\bibitem{MishaBrandonOlly}
Brandon Hanson, Oliver Roche-Newton, and Misha Rudnev.
\newblock Higher convexity and iterated sum sets.
\newblock {\em Combinatorica}, 42(1):71--85, 2022.

\bibitem{HansonRNSenger}
Brandon Hanson, Oliver Roche-Newton, and Steven Senger.
\newblock Convexity, superquadratic growth, and dot products.
\newblock {\em Journal of the London Mathematical Society}, 107(5):1900--1923, 2023.

\bibitem{hegyvari1986consecutive}
Norbert Hegyv{\'a}ri.
\newblock On consecutive sums in sequences.
\newblock {\em Acta Mathematica Hungarica}, 48(1-2):193--200, 1986.

\bibitem{Mathematica}
Wolfram~Research{,} Inc.
\newblock Mathematica, {V}ersion 14.2.
\newblock Champaign, IL, 2024.

\bibitem{KPR}
Sergei~V Konyagin, Jonathan Passant, and Misha Rudnev.
\newblock On distinct angles in the plane.
\newblock {\em arXiv preprint arXiv:2402.15484}, 2024.

\bibitem{markushevich1965theory}
A.I. Markushevich and R.A. Silverman.
\newblock {\em Theory of Functions of a Complex Variable}.
\newblock Number v. 2 in Selected Russian publications in the mathematical sciences. Prentice-Hall, 1965.

\bibitem{10.1215/00127094-3674103}
Orit~E. Raz, Micha Sharir, and Frank~De Zeeuw.
\newblock {Polynomials vanishing on Cartesian products: The Elekes–Szabó theorem revisited}.
\newblock {\em Duke Mathematical Journal}, 165(18):3517 -- 3566, 2016.

\bibitem{Olly}
Oliver Roche-Newton.
\newblock A lower bound for the number of pinned angles determined by a cartesian product set.
\newblock {\em arXiv preprint arXiv:2406.06206}, 2024.

\bibitem{roche2024convexity}
Oliver Roche-Newton and Elaine Wong.
\newblock Convexity, squeezing, and the elekes-szab{\'o} theorem.
\newblock {\em The Electronic Journal of Combinatorics}, 31(1), 2024.

\bibitem{rudnev2022update}
Misha Rudnev and Sophie Stevens.
\newblock An update on the sum-product problem.
\newblock In {\em Mathematical proceedings of the cambridge philosophical society}, volume 173, pages 411--430. Cambridge University Press, 2022.

\bibitem{shakanetc}
Imre Ruzsa, George Shakan, J{\'o}zsef Solymosi, and Endre Szemer{\'e}di.
\newblock On distinct consecutive differences.
\newblock In {\em Combinatorial and Additive Number Theory IV: CANT, New York, USA, 2019 and 2020 4}, pages 425--434. Springer, 2021.

\bibitem{solymosi2024improved}
Jozsef Solymosi and Joshua Zahl.
\newblock Improved elekes-szab{\'o} type estimates using proximity.
\newblock {\em Journal of Combinatorial Theory, Series A}, 201:105813, 2024.

\bibitem{stevens2022sum}
Sophie Stevens and Audie Warren.
\newblock On sum sets and convex functions.
\newblock {\em The Electronic Journal of Combinatorics}, pages P2--18, 2022.

\end{thebibliography}
\bibliographystyle{plain}
\end{document}